\definecolor{darkgreen}{RGB}{35, 89, 52}
\definecolor{paleorange}{RGB}{255, 236, 207}
\definecolor{paleyellow}{RGB}{255, 252, 217}
\definecolor{paleteal}{RGB}{223, 245, 243}
\definecolor{palered}{RGB}{255, 234, 232}
\definecolor{palepurple}{RGB}{238, 219, 255}
\definecolor{fungreen}{RGB}{0,137,40}
\definecolor{resred}{RGB}{198,42,8}
\theoremstyle{definition}
\newtheorem{counter}{Definition}[section]
\newenvironment{definition}
  {\pushQED{\qed}\definitionx}
  {\popQED\enddefinitionx}
\theoremstyle{plain}
\theoremstyle{plain} 
\newtheorem{proposition}[counter]{Proposition}
\theoremstyle{plain}
\newtheorem{theorem}[counter]{Theorem}
\theoremstyle{plain}
\theoremstyle{plain}
\theoremstyle{plain}
\theoremstyle{remark}
\newenvironment{example}
  {\pushQED{\qed}\examplex}
  {\popQED\endexamplex}
\newcommand{\continuation}{??}
\newenvironment{continueexample}[1]
  {\pushQED{\qed}\renewcommand{\continuation}{\ref{#1}}\excont[continued]}
  {\popQED\endexcont}
\theoremstyle{remark}
\newenvironment{counterexample}
  {\pushQED{\qed}\counterexamplex}
  {\popQED\endcounterexamplex}
\theoremstyle{remark}
\newenvironment{mainexample}
  {\pushQED{\qed}\mainexamplex}
  {\popQED\endmainexamplex}
\crefname{mainexamplex}{Example}{Examples} %
\theoremstyle{remark}
\newtheorem{remark}[counter]{Remark}
\newcommand{\Reals}{\mathbb{R}}
\newcommand{\PosReals}{\Reals_{\geq 0}}
\newcommand{\then}{\fatsemi}
\newcommand{\Ob}{\mathsf{Ob}}
\newcommand{\cat}[1]{\mathcal{#1}}
\newcommand{\functor}[1]{\mathsf{#1}}
\newcommand{\twocat}[2]{{\mathbb{#1}\mathsf{#2}}}
\newcommand{\C}{\cat{C}}
\newcommand{\D}{\cat{D}}
\newcommand{\E}{\cat{E}}
\newcommand{\F}{\cat{F}}
\newcommand{\R}{\cat{R}}
\newcommand{\U}{\cat{U}}
\newcommand{\V}{\cat{V}}
\newcommand{\W}{\cat{W}}
\newcommand{\Kl}{\mathsf{Kl}} %
\newcommand{\Para}{\mathbf{Para}}
\newcommand{\Bool}{\mathsf{Bool}}
\newcommand{\Pos}{\mathsf{Pos}}
\newcommand{\Set}{\mathsf{Set}}
\newcommand{\FinSet}{\mathsf{FinSet}}
\newcommand{\Top}{\mathsf{Top}}
\newcommand{\smcat}{\mathsf{cat}} %
\newcommand{\Meas}{\mathsf{Meas}}
\newcommand{\DP}{\mathsf{DP}}
\newcommand{\PosDP}{\mathsf{DP}_\leq}
\newcommand{\Stoch}{\mathsf{Stoch}}
\newcommand{\Petri}{\mathsf{Petri}}
\newcommand{\Csp}{\mathbf{Csp}}
\newcommand{\copyMor}{\mathsf{cp}}
\newcommand{\delMor}{\mathsf{del}}
\newcommand{\SigAlg}{\Sigma}
\newcommand{\interval}[2]{[#1, #2]}
\newcommand{\prodSmcat}{\times}
\newcommand{\prodW}{\otimes}
\newcommand{\powerset}{\functor{Pow}}
\newcommand{\ParaU}[3]{(#1 / #2 #3)}
\newcommand{\PowNoEmpty}{\functor{Pow_\varnothing}}
\newcommand{\Dist}{\functor{D}} %
\newcommand{\Arr}{\functor{Arr}}
\newcommand{\TwiArr}{\functor{TwiArr}}
\newcommand{\leftFunctor}{\functor{L}}
\newcommand{\CCat}{\twocat{C}{at}} %
\newcommand{\MMoncat}{\twocat{M}{oncat}} %
\newcommand{\TTwocat}{\mathbbm{2}\mathsf{cat}} %
\newcommand{\VCat}{\twocat{V}{cat}}
\newcommand{\WCat}{\twocat{W}{cat}}
\newcommand{\CC}{\twocat{C}{}}
\newcommand{\oset}[3][0ex]{%
  \mathrel{\mathop{#3}\limits^{
    \vbox to#1{\kern-2.2\ex@
    \hbox{$\scriptstyle#2$}\vss}}}}
\newcommand{\vertthen}{{\oset{\diamond}{\text{{\clipbox{0 0 0 3.4pt}{$\fatsemi$}}}}}}
\newcommand{\horthen}{{\oset{\star}{\text{{\clipbox{0 0 0 3.4pt}{$\fatsemi$}}}}}}
\newcommand{\op}[1]{#1^{\text{op}}}
\newcommand{\id}{\mathsf{id}}
\newcommand{\Id}{\mathsf{Id}}
\newcommand{\monadM}{\functor{M}}
\newcommand{\monadUnit}{\eta}
\newcommand{\monadMul}{\mu}
\newcommand{\monadTup}{(\monadM, \monadMul, \monadUnit)}
\newcommand{\symMonadMor}{\nabla}
\newcommand{\symMonadMorOf}[2]{\symMonadMor_{#1,#2}}
\newcommand{\commacat}[2]{(#1 \downarrow #2)} %
\newcommand{\sliceWith}[1]{/#1}
\newcommand{\sliceFunctor}{\sliceWith{(-)}}
\newcommand{\unitObj}{1}
\newcommand{\unitOf}[1]{\unitObj_#1}
\newcommand{\unitW}{I_\W}
\newcommand{\idUnitW}{\id_{\unitW}}
\newcommand{\unitCat}{\unitOf{{\smcat}}}
\newcommand{\poset}[1]{\cat{#1}}
\newcommand{\posetP}{\poset{P}}
\newcommand{\posetQ}{\poset{Q}}
\newcommand{\posetR}{\poset{R}}
\newcommand{\posetF}{\poset{F}}
\newcommand{\posetleq}{\preceq}
\newcommand{\designProblem}{d}
\newcommand{\elementTo}{\mapsto}
\newcommand{\functorArr}{\to}
\newcommand{\morphism}[1]{#1}
\newcommand{\ChassisDP}{\morphism{C}}
\newcommand{\BatteryDP}{\morphism{B}}
\newcommand{\monfunPhi}{\varphi}
\newcommand{\paramTheta}{\theta}
\newcommand{\True}{\top}
\newcommand{\False}{\bot}
\newcommand{\funsty}[1]{\textcolor{fungreen}{#1}}
\newcommand{\ressty}[1]{\textcolor{resred}{#1}}
\newcommand{\FixFunMinRes}{\emph{Fix\funsty{Fun}Min\ressty{Res}}}
\title{Composable Uncertainty in Symmetric Monoidal Categories for Design Problems}
\author{
Marius Furter\thanks{Lead author}
\institute{University of Zurich \\ Zurich, Switzerland}
\email{marius.furter@math.uzh.ch}
\and
Yujun Huang
\institute{Massachusetts Institute of Technology \\ Cambridge, MA, USA}
\email{yujun233@mit.edu}
\and
Gioele Zardini
\institute{Massachusetts Institute of Technology \\ Cambridge, MA, USA}
\email{gzardini@mit.edu}
}
\begin{document}
\maketitle

\begin{abstract}
Applied category theory often studies symmetric monoidal categories (SMCs) whose morphisms represent open systems.
These structures naturally accommodate complex wiring patterns, leveraging (co)monoidal structures for splitting and merging wires, or compact closed structures for feedback. 
A key example is the compact closed SMC of design problems ($\DP$), which enables a compositional approach to co-design in engineering.
However, in practice, the systems of interest may not be fully known.
Recently, Markov categories have emerged as a powerful framework for modeling uncertain processes. 
In this work, we demonstrate how to integrate this perspective into the study of open systems while preserving consistency with the underlying SMC structure.
To this end, we employ the change-of-base construction for enriched categories, replacing the morphisms of a symmetric monoidal $\V$-category $\C$ with parametric maps $A \to \C(X,Y)$ in a Markov category induced by a symmetric monoidal monad. 
This results in a symmetric monoidal 2-category $N_*\C$ with the same objects as $\C$ and reparametrization 2-cells. 
By choosing different monads, we capture various types of uncertainty. 
The category underlying $\C$ embeds into $N_*\C$ via a strict symmetric monoidal functor, allowing (co)monoidal and compact closed structures to be transferred. 
Applied to $\DP$, this construction leads to categories of practical relevance, such as parametrized design problems for optimization, and parametrized distributions of design problems for decision theory and Bayesian learning.
\end{abstract}

\maketitle

\section{Introduction}
The design of complex systems is a critical but challenging task for society.
Difficulties arise from interactions between heterogeneous components, competing design objectives, diverse stakeholders with varied interests, and computationally intensive models.
Co-design tackles these challenges by leveraging a categorical model of design problems called $\DP$ \cite{zardiniCoDesignComplexSystems2023}.
$\DP$ is a compact closed symmetric monoidal category (SMC), whose objects are partially ordered sets (posets), and whose morphisms are monotone maps from $\op{\posetF} \times \posetR$ to the poset of truth values $\Bool = \{ \False \leq \True \}$.
It models heterogeneous systems as processes providing \emph{functionalities} $f \in \posetF$ in return for \emph{resources} $r \in \posetR$.
Monotonicity captures the intuition that, if a resource $r$ suffices to provide functionality $f$, then it also suffices for any worse functionality $f' \leq f$. Moreover, any better resource $r' \geq r$ should also suffice to provide $f$.
Posets present transparent trade-offs between design objectives and provide a common ground for stakeholder discussions.
Moreover, $\DP$ enables functorial \emph{queries} which decompose complex design problems into simpler ones that can be solved efficiently.

\noindent Current co-design methods address uncertainty using upper and lower bounds \cite{censiUncertainty2017}.
This approach is robust and well-suited for safety-critical applications, ensuring system performance even in worst-case scenarios.
Furthermore, it is effective for approximating and solving continuous co-design via discretization.
However, this method is unable to provide quantitative measures of uncertainty, such as the probability that a system will meet functional requirements given specific resource constraints.
Understanding such trade-off between functionality, resource usage, and success probability is crucial in many cases.

\noindent Furthermore, design problems often involve \emph{parametric} uncertainties linked to design choices.
For instance, when designing soft-robot manipulators, the choice of material introduces the elastic modulus as a parameter that follows a distribution.
Moreover, its optimal value differs between components: a lower elastic modulus benefits hardware design by lowering driving force requirements, while a higher modulus benefits controller design by improving resistance to disturbances.
Such examples highlight the need to incorporate quantitative uncertainty into co-design in a way that can depend on design parameters.

\noindent We propose a general scheme for adding parametrized uncertainty to any SMC.
Importantly, this process preserves compact closed and (co)monoidal structure.
Our approach supports any uncertainty semantics modeled by symmetric monoidal monads. 
This includes intervals, subsets and distributions, whose Kleisli categories form Markov categories.
The parametrization enables decision-making and learning for any process represented by a diagram in the SMC.

\noindent In more detail, given a $\V$-category $\C$, we replace its arrows $\C(X,Y)$ with parametric maps $A \to \monadM \C(X,Y)$, where $\monadM \colon \V \to \V$ is a symmetric monoidal monad. The strength $\symMonadMor \colon \monadM(-) \otimes \monadM(=) \to \monadM(- \otimes =)$ allows us to compose $f \colon A \to \monadM \C(X,Y)$ and $g \colon B \to \monadM \C(Y,Z)$ according to
\[
A \otimes B \overset{f \otimes g}{\longrightarrow}  \monadM \C(X,Y) \otimes \monadM \C(Y,Z) \overset{\symMonadMor}{\longrightarrow} \monadM( \C(X,Y) \otimes \C(Y,Z) ) \overset{\monadM \then_\C}{\longrightarrow} \monadM \C(X,Z).
\]
In addition, any $\V$-arrow $\varphi \colon A' \to \monadM A$ allows us to reparametrize $f$ to $A' \to \monadM \C(X,Y)$ using Kleisli composition, providing 2-categorical structure. 
Furthermore, when $\C$ is monoidal, we can define a monoidal product analogous to composition. Hence, our construction promises to be a monoidal 2-category.

\noindent We prove this using change-of-base: Given a monoidal functor $N \colon \V \to \W$, we can convert a $\V$-category $\C$ into a $\W$-category $N_*\C$ by replacing its arrows by $N\C(X,Y)$. 
Moreover, when $N$ is symmetric and $\C$ (symmetric) monoidal, then so is $N_* C$. 
Finally, there is an inclusion of $\C_0 \to (N_* \C)_0$ of underlying categories that strictly preserves the monoidal product and coherence maps.

\noindent We view the proposed construction as a change-of-base along $\leftFunctor_\monadM \colon \V \to\Kl_\monadM $ and $\Kl_\monadM \sliceFunctor \colon \Kl_\monadM \to \smcat$, where $\leftFunctor_\monadM$ is the identity-on-objects functor sending $f \mapsto f \then \monadUnit$, and $\Kl_\monadM \sliceFunctor$ maps $X$ to the slice category $\Kl_\monadM \sliceWith{X}$. 
However, $\Kl_\monadM \sliceWith{X}$ is only symmetric monoidal up to invertible 2-cells. Nonetheless, when $\Kl_\monadM$ is strict (as can be assumed by strictification), change-of-base yields a (symmetric) monoidal 2-category where interchange (and naturality of symmetry) hold up to coherent reparametrization isomorphisms.

\noindent Applied to design problems, this construction leads to categories of practical relevance. 
We demonstrate that $\DP$ may be viewed as enriched in sets, posets, and measurable spaces. 
Hence, we can choose monads in any of these setting to get parametrized subsets, intervals, and distributions of design problems. 
In all cases, the compact closed SMC structure transfers to the resulting 2-category.

\noindent The remainder of the paper is structured as follows. \Cref{sec:background} introduces the required background on enriched categories and change-of-base.
\Cref{sec:parametric-uncertainty} describes uncertainty monads and the partial slice functor. 
It concludes with an explicit description of the proposed scheme in \cref{sec:main-construction}.
Finally, \cref{sec:learning-dps} describes categories of uncertain DPs and demonstrates their usefulness.
\paragraph{Acknowledgments.} MF thanks the Digital Society Initiative for funding his Ph.D., the Rudge (1948) and Nancy Allen Chair for his visit to LIDS at MIT, and SNF Grant 200021\_227719 for travel costs.

\section{Background}\label{sec:background}
This section introduces enriched categories and the change-of-base construction. 
Moreover, it explains that change-of-base is 2-functorial and induces a transfer of symmetric monoidal structure. 
The latter result originates from Geoff Cruttwell's Ph.D.~thesis~\cite{cruttwellNormed2008}, which provides an exceptionally clear exposition.
We assume familiarity with SMCs \cite{macLaneCategories1998}, which we usually denote $(\V, \otimes, I)$ with coherence maps $\alpha, \rho, \lambda$, and $\sigma$. The key definitions are gathered in \cref{app:monoidal-categories}. Many SMCs we use are Cartesian, including $\Set$ (sets and functions), $\Pos$ (posets and monotone functions), $\Top$ (topological spaces and continuous functions), $\Meas$ (measurable spaces and measurable functions), and $\smcat$ (small categories and functors). Our 2-categories will always be \emph{strict} and we use $f \then g := g \circ f$ for composition in diagrammatic order.

\begin{mainexample}[Design problems]
    Let $\Bool := \{ \bot \leq \top\}$ be the poset of truth values. Given posets $\F$ and $\R$, a \emph{feasibility relation} $\Phi \colon \F \to \R$ is a monotone map $\op{\F} \times \R \to \Bool$. Posets and feasibility relations assemble into a compact closed SMC $\DP$ of \emph{design problems} \cite{fongInvitation2019, zardiniCoDesignComplexSystems2023} under 
    \[
        (\Phi \then \Psi)(f,q) := \bigvee_{r \in \R} \Phi(f,r) \wedge \Psi(r,q),
        \qquad \quad
        (\Phi_1 \otimes \Phi_2)[(f_1,f_2),(r_1,r_2)] := \Phi_1(f_1,r_1) \wedge \Phi_2(f_2,r_1),
    \]
    where we define $\posetF_1 \otimes \posetF_2 := \posetF_1 \times \posetF_2$ to be the Cartesian product of posets.
\end{mainexample}
\noindent We recall monoidal functors due to their key role in the change-of-base construction.

\begin{definition}[Monoidal functor]
    Let $(\V,\otimes,I)$ and $(\W,\bullet, J)$ be monoidal categories. A \emph{(lax) monoidal functor} $N \colon \V \to \W$ is a functor from $\V$ to $\W$, along with natural transformations whose components 
    \[
        N_\epsilon \colon J \to NI, \qquad \qquad
        \widetilde N_{A,B} \colon NA \bullet NB \to N(A \otimes B)
    \]
    satisfy the associativity and unitality conditions of \Cref{def:app-monoidal-functor}. 
    If $N_\epsilon$ and $\widetilde N$ are isomorphisms, we call $N$ \emph{strong}; if they are identities, we call $N$ \emph{strict}. If $\V$ and $\W$ are symmetric, then $N$ is called \emph{symmetric} if it preserves $\sigma$ in the sense of $\widetilde N_{A,B} \then N(\sigma_\V) = \sigma_\W \then \widetilde N_{B,A}$.
\end{definition}
\noindent Just as small categories, functors, and natural transformations assemble into a 2-category $\CCat$, their monoidal counterparts also form a 2-category called $\MMoncat$.

\subsection{Enriched categories}
Enriched categories \cite{kellyBasic1982} replace hom-sets by more general hom-objects that inhabit a monoidal category $\V$. 
We will always assume $\V$ to be symmetric so that the monoidal product of $\V$-categories is defined.

\begin{definition}[$\V$-category]
    Given an SMC $(\V, \otimes, I)$, a \emph{$\V$-category} $\C$ consists of
    \begin{compactitem}
        \item[(i)] a set of \emph{objects} $\Ob(\C)$, and for all $A,B,C \in \Ob(\C)$,
        \item[(ii)] a $\V$-object of \emph{arrows} $\C(A,B) \in \Ob(\V)$,
        \item[(iii)] \emph{composition} arrows $\then_{A,B,C} \colon \C(A,B) \otimes \C(B,C) \to \C(A,C),$
        \item[(iv)] and \emph{identity} arrows $\id_A \colon I \to \C(A,A)$,
    \end{compactitem}
    that satisfy the following unitality and associativity conditions:
    \[
    \begin{tikzcd}[cramped]
    	{(\C(A,B) \otimes \C(B,C)) \otimes \C(C,D)} & {\C(A,C) \otimes \C(C,D)} \\
    	{\C(A,B) \otimes (\C(B,C) \otimes \C(C,D))} \\
    	{\C(A,B) \otimes \C(B,D)} & {\C(A,D)}
    	\arrow["{\then \otimes 1}", from=1-1, to=1-2]
    	\arrow["\alpha"', from=1-1, to=2-1]
    	\arrow["\then", from=1-2, to=3-2]
    	\arrow["{1 \otimes \then}"', from=2-1, to=3-1]
    	\arrow["\then"', from=3-1, to=3-2]
    \end{tikzcd}
    \qquad\qquad
    \begin{tikzcd}[cramped, sep = scriptsize]
    	{I \otimes \C(A,B)} \\
    	{\C(A,A) \otimes \C(A,B)} & {\C(A,B)} \\
    	{\C(A,B) \otimes I} \\
    	{\C(A,B) \otimes \C(B,B)} & {\C(A,B)}
    	\arrow["{\id_A \otimes 1}"', from=1-1, to=2-1]
    	\arrow["\lambda", from=1-1, to=2-2]
    	\arrow["\then"', from=2-1, to=2-2]
    	\arrow["{1 \otimes \id_B}"', from=3-1, to=4-1]
    	\arrow["\rho", from=3-1, to=4-2]
    	\arrow["\then"', from=4-1, to=4-2]
    \end{tikzcd}
    \vspace*{-\baselineskip}
\]
\end{definition}

\begin{definition}[$\V$-functor]
    Let $\C$ and $\D$ be $\V$-categories. A \emph{$\V$-functor} $F \colon \C \to \D$ consists of a function $F_\Ob \colon \Ob(\C) \to \Ob(\D)$, along with $\V$-arrows $F_{A,B} \colon \C(A,B) \to \D(F_\Ob (A),F_\Ob(B) )$, indexed by objects $A,B \in \Ob(\C)$. These data must preserve composition and identities:
    \[\begin{tikzcd}[cramped, row sep = scriptsize]
    	{\C(A,B) \otimes \C(B,C)} && {\D(FA,FB) \otimes \D(FB,FC)} \\
    	{\C(A,C)} && {\D(FA,FC)}
    	\arrow["{F_{A,B} \otimes F_{B,C}}", from=1-1, to=1-3]
    	\arrow["\then"', from=1-1, to=2-1]
    	\arrow["\then", from=1-3, to=2-3]
    	\arrow["{F_{A,C}}", from=2-1, to=2-3]
    \end{tikzcd}
    \qquad \qquad
    \begin{tikzcd}[cramped, row sep = scriptsize]
    	I \\
    	{\C(A,A)} & {\D(FA,FA)}
    	\arrow["{\id_A}"', from=1-1, to=2-1]
    	\arrow["{\id_{FA}}", from=1-1, to=2-2]
    	\arrow["{F_{A,A}}", from=2-1, to=2-2]
    \end{tikzcd}\]
    Given $\V$-functors $F \colon \C \to \D$ and $G \colon \D \to \E$, we define their \emph{composite} $F \then G \colon \C \to \E$ on objects as $(F \then G)_\Ob := F_\Ob \then G_\Ob$, and on arrows as $(F \then G)_{A,B} := F_{A,B} \then G_{FA,FB}$. 
\end{definition}

\begin{definition}[$\V$-natural transformation]
    Let $F,G : \C \to \D$ be $\V$-functors. A \emph{$\V$-natural transformation} $\tau: F \Rightarrow G$ is a family of $\V$-arrows $\tau_A: I \to \D(FA,GA)$ that satisfy the $\V$-naturality condition of \cref{app:V-natural-transformation}.
    They compose both vertically and horizontally, as explained in \Cref{app:enriched-category-theory}.
\end{definition}
\noindent $\V$-categories, $\V$-functors, and $\V$-natural transformations assemble into a 2-category $\VCat$ \cite[Thm.~10.2]{eilenbergClosed1966}.

\begin{example}
     $\Set$-categories are locally small categories. Similarly, $\Set$-functors and $\Set$-natural transformations can be identified with their ordinary counterparts.
\end{example}

\begin{example}
    A $\smcat$-enriched category $\twocat{C}{}$ is a locally small 2-category. For any objects $A,B$, we have a hom-category $\C(A,B)$. We call the objects of $\twocat{C}{}$ \emph{0-cells}, the objects of $\C(A,B)$ \emph{1-cells}, and the arrows of $\C(A,B)$ \emph{2-cells}. There are two directions of composition: Composition in $\C(A,B)$, denoted by $\vertthen$, will be called \emph{vertical}, while the composition functor, denoted $\horthen \colon \C(A,B) \times \C(B,C) \to \C(A,C)$, will be called \emph{horizontal composition}. For more details, see \cite[Chapter 2.3]{johnson2Dimensional2020}.
\end{example}

\begin{example}
    $\smcat$-functors are 2-functors. Explicitly, a 2-functor $F \colon \twocat{C}{} \to \twocat{D}{}$ maps 0-cells with $F_\Ob$, and 1- and 2-cells with functors $F_{A,B} \colon \twocat{C}{}(A,B) \to \twocat{D}{}(FA,FB)$ in a way that strictly preserves horizontal composition and identities. Hence, $F$ preserves both vertical and horizontal compositional structures.
\end{example}

\begin{mainexample}\label{ex:DP-pos-enriched}
    We can view $\DP$ as enriched in $\Pos$: For posets $\F,\R$, the set of feasibility relations $\DP(\F,\R)$ has a natural point-wise ordering. Since $\wedge$ and $\vee$ are monotone operations, composition of feasibility relations is monotone with respect to this ordering. Associativity and unitality mean the same in $\Pos$ as they do in $\Set$. $\Pos$-functors are ordinary functors whose on-arrows maps are monotone.
\end{mainexample}

\begin{definition}[Category underlying a $\V$-category]
    We write $(-)_0 \colon \VCat \to \CCat$ for the representable 2-functor $\VCat(I_\VCat,-)$, where $I_\VCat$ is the $\V$-category with a single object $*$ and arrows $I_\VCat(*,*) := I$. This 2-functor sends a $\V$-category $\C$ to the ordinary category $\C_0$ whose objects are those of $\C$, and whose arrows $f \colon A \to B$ are $\V$-arrows of the form $f \colon I \to \C(A,B)$. Composition is defined by
    \[ I \cong I \otimes I \overset{f \otimes g}{\longrightarrow} \C(A,B) \otimes \C(B,C) \overset{\then}{\longrightarrow} \C(A,C). \]
    A $\V$-functor $F \colon \C \to \D$ is mapped to the functor $F_0 \colon \C_0 \to \D_0$ that sends $A \mapsto FA$ and $f \colon I \to \C(A,B)$ to $Ff := f \then F_{A,B}$. Finally, a $\V$-natural transformation $\tau \colon F \Rightarrow G$ with components $\tau_A \colon I \to \D(FA,GA)$ is sent to the natural transformation $\tau_0 \colon F_0 \Rightarrow G_0$ with components $(\tau_0)_A := \tau_A$.
\end{definition}

\begin{definition}[Monoidal product of $\V$-categories.] \label{def:monoidal-prod-Vcats}
    Given $\V$-categories $\C$ and $\D$, their \emph{monoidal product} $\C \otimes \D$ has objects $\Ob(\C \otimes \D) := \Ob(\C) \times \Ob(\D)$ and arrows $(\C \otimes \D)[(A_1,A_2),(B_1,B_2)] := \C(A_1,B_1) \otimes \D(A_2,B_2)$.  
    Composition and identities are given by
    \[
    \begin{tikzcd}[cramped, sep = scriptsize]
    	{[\C(A_1,B_1) \otimes \D(A_2, B_2)] \otimes [\C(B_1,C_1) \otimes \D(B_2, C_2)]} \\
    	{[\C(A_1,B_1) \otimes \C(B_1, C_1)] \otimes [\D(A_2,B_2) \otimes \D(B_2, C_2)]} & {\C(A_1,C_1) \otimes \D(A_2,C_2)}
    	\arrow["m"', from=1-1, to=2-1]
    	\arrow["{\then_{\C \otimes \D}}", dashed, from=1-1, to=2-2]
    	\arrow["{\then_\C \otimes \then_\D}"', from=2-1, to=2-2]
    \end{tikzcd}
    \qquad
    \begin{tikzcd}[cramped, sep = scriptsize]
    	{I \cong I \otimes I } \\
    	{\C(A,A) \otimes \D(B,B)}
    	\arrow["{\id_A \otimes \id_B}"', from=1-1, to=2-1]
    \end{tikzcd}
    \]
    where $m$ is the unique coherence isomorphism composed of $\alpha$ and $\sigma$. This product has $I_\VCat$ as unit.
\end{definition}

\begin{definition}
    Let $\nu \colon \C_0 \times \C_0 \to (\C \otimes \C)_0$ be the identity-on-objects functor sending pairs $f_i \colon I \to \C(A_i,B_i)$ to $f_1 \otimes f_2$.
    Given a $\V$-functor $F \colon \C \otimes \C \to \C$, we define $\bar F_0$ as $\nu \then F_0 \colon \C_0 \times \C_0  \to \C_0$.
\end{definition}

\begin{definition}[Monoidal $\V$-category]
    A \emph{monoidal $\V$-category} $(\C, \boxtimes, J)$ consists of a $\V$-category $\C$, a $\V$-functor $\boxtimes \colon \C \otimes \C \to \C$, a unit object $J \in \Ob(\C)$, and $\V$-natural isomorphisms with components
    \[ 
        a_{A,B,C} \colon I \to \C((A \boxtimes B) \boxtimes C, A \boxtimes (B \boxtimes C)), \qquad 
        r_A \colon I \to \C(A \boxtimes J, A), \qquad
        l_A \colon I \to \C(J \boxtimes A, A),
    \]
    that make $(\C_0, \bar \boxtimes_0, J)$ into a monoidal category.
    We call $(\C,\boxtimes, J)$ \emph{symmetric}, if it comes equipped with a $\V$-natural isomorphism $s_{A,B} \colon I \to \C(A \boxtimes B, B \boxtimes A)$ that makes $(\C_0,\bar \boxtimes_0, J)$ symmetric monoidal.
\end{definition}

\begin{example}
    A $\Set$-enriched SMC may be identified with an ordinary SMC. A $\Pos$-enriched SMC is an ordinary SMC whose hom-sets are posets, and composition and the monoidal product are monotone.
\end{example}

\begin{mainexample}\label{ex:DP-pos-enriched-smc}
    The monoidal product in $\DP$ is monotone (since $\wedge$ is). Hence, we may view $\DP$ as a symmetric monoidal $\Pos$-category.
\end{mainexample}

\begin{example} \label{ex:monoidal-cat-category}
    A monoidal $\smcat$-category is a strict instance of a monoidal 2-category, as defined in \cite{stayCompact2016}. It consists of a 2-category $\CC$ with a 2-functor $\boxtimes \colon \CC \times \CC \to \CC$, unit object $J \in \Ob(\CC)$, and invertible 1-cells $a_{A,B,C} \colon (A \boxtimes C) \boxtimes D \cong  A \boxtimes (C \boxtimes D)$, $r_A \colon A \boxtimes J \cong A$, and $l_A \colon J \boxtimes A \cong A$ that are natural in $A,B,C$, and satisfy the pentagon and triangle identities. A symmetric structure on $\CC$ consists of 1-cells $s_{A,B} \colon A \boxtimes B \to B \boxtimes A$ that are natural in $A,B$, satisfy $s_{A,B} \then s_{B,A} = \id$ and the coherence axioms for SMCs.
\end{example}

\noindent Our main construction in \cref{thm:main-construction} nearly produces symmetric monoidal $\smcat$-categories.

\begin{definition} \label{def:nearly-symmetric-monoidal-2cat}
    A \emph{nearly strict monoidal 2-category} is a monoidal $\smcat$-category where $\boxtimes \colon \CC \times \CC \to \CC$ is only required to be a pseudofunctor  \cite[Def.~4.1.2]{johnson2Dimensional2020}. 
    In particular, interchange only holds up to coherent invertible 2-cells $\vartheta_{f_1,f_2,g_1,g_2} \colon (f_1 \boxtimes f_2) \then (g_1 \boxtimes g_2) \Rightarrow (f_1 \then g_1) \boxtimes (f_2 \then g_2)$
    called \emph{tensorators}.
    A \emph{nearly strict symmetric monoidal 2-category} is a nearly strict monoidal 2-category together with a strong 2-natural transformation \cite[Def.~4.2.1]{johnson2Dimensional2020} with 1-cell components $s_{A,B} \colon A \boxtimes B \to B \boxtimes A$ that satisfy $s_{A,B} \then s_{B,A} = \id$ and the coherence axioms for SMCs. Thus, symmetry coherence is only required to be natural up to invertible 2-cells.
    Nearly strict (symmetric) monoidal 2-categories are still particularly strict instances of the (symmetric) monoidal 2-categories of \cite{stayCompact2016}.
\end{definition}

\subsection{Change of base}

Given a monoidal functor $N \colon \V \to \W$, we can map the hom-objects of a $\V$-category $\C$ along $F$ to get a $\W$-category. This was first shown in \cite{eilenbergClosed1966}, with clear proofs available in \cite[Props.~4.2.1-3, Thm.~4.2.4]{cruttwellNormed2008}.

\begin{proposition}[Change-of-base]\label{prop:change-of-base}
    Any monoidal functor $N \colon \V \to \W$ induces a 2-functor $N_* \colon \VCat \to \WCat$, given by change-of-base along $N$. Explicitly, $N_*$ is defined as follows:
    \begin{compactitem}
        \item[(i)] A $\V$-category $\C$ is sent to the $\W$-category $F_*\C$ which has the same objects as $\C$, arrows given by $(N_*\C)(A,B) := N\C(A,B)$, with composition and identities
        \begin{align*}
            \then_{N_*\C} &:= N\C(A,B) \otimes_W N\C(B,C) \overset{\widetilde N}{\longrightarrow} N( \C(A,B) \otimes_\V \C(B,C) ) \overset{N\then_\C}{\longrightarrow} N\C(A,C), \\
            \id_{*A} &:= I_\W \overset{N_\epsilon}{\longrightarrow} NI_\V \overset{N\id_A}{\longrightarrow} N\C(A,A).
        \end{align*}        
    \item[(ii)] A $\V$-functor $F \colon \C \to \D$ maps to the $\W$-functor $N_*F \colon N_*\C \to N_*\D$ which acts on objects by $(N_*F)_\Ob := F_\Ob$ and on arrows by $(N_*F)_{A,B} := NF_{A,B}$.
    \item[(iii)] A $\V$-natural transformation $\tau \colon F \Rightarrow G$ maps to the $\W$-natural transformation $N_*\tau \colon N_*F \Rightarrow N_*G$ with components $(N_* \tau)_A := I_\W \overset{N_\epsilon}{\longrightarrow} NI_\V \overset{N\tau_A}{\longrightarrow} N\D(FA,GA)$.
    \end{compactitem}
\end{proposition}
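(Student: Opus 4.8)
The plan is to check, clause by clause, that $N_*$ is well-defined on $0$-, $1$-, and $2$-cells, and then that it is strictly $2$-functorial. Every step is a diagram chase whose only inputs are: the naturality of the structure transformation $\widetilde N \colon N(-) \otimes N(=) \Rightarrow N(- \otimes =)$, the associativity and unitality coherence axioms of $N$ as a monoidal functor (\cref{def:app-monoidal-functor}), the plain functoriality of $N$ (so that $Nf \then Ng = N(f \then g)$ and $N\id = \id$), and the $\V$-category / $\V$-functor / $\V$-naturality axioms of the data being transported. No new idea is required; the content is entirely bookkeeping, and a careful write-up of exactly this appears in \cite[Props.~4.2.1--3, Thm.~4.2.4]{cruttwellNormed2008}, which we follow.

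\textbf{Clause (i): $N_*\C$ is a $\W$-category.}
Substituting $\then_{N_*\C} = \widetilde N \then N\then_\C$ and $\id_{*A} = N_\epsilon \then N\id_A$, the associativity pentagon for $N_*\C$ becomes a diagram in $\W$ that I would split into three kinds of region: naturality squares of $\widetilde N$ (applied once to $\then_\C$ in the left slot and, on the other branch, to $\then_\C$ in the right slot), which pull all occurrences of $\widetilde N$ to the outside; the image under $N$ of the associativity pentagon of $\C$ (fusing the two $N\then_\C$'s by functoriality of $N$); and the associativity coherence axiom of $N$, which is precisely what converts $(\widetilde N \otimes 1)\then \widetilde N \then N\alpha_\V$ into $\alpha_\W \then (1 \otimes \widetilde N)\then \widetilde N$. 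The two unit triangles are handled the same way: a naturality square of $\widetilde N$ (to move it past $N\id_A \otimes 1$, resp.\ $1 \otimes N\id_A$), then $N$ of the corresponding unit triangle of $\C$, then the left- (resp.\ right-)unit coherence axiom of $N$, which relates $(N_\epsilon \otimes 1)\then \widetilde N \then N\lambda_\V$ to $\lambda_\W$ (resp.\ the $\rho$ version).

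\textbf{Clauses (ii) and (iii).}
For a $\V$-functor $F$, preservation of composition by $N_*F$ reduces, after expanding $\then_{N_*\C}$ and $\then_{N_*\D}$, to one naturality square of $\widetilde N$ (applied to the pair $F_{A,B}, F_{B,C}$) followed by $N$ applied to the composition-preservation square of $F$; preservation of identities is the single equation $N_\epsilon \then N\id_{FA} = N_\epsilon \then N(\id_A \then F_{A,A})$, immediate from $F$ preserving identities. For a $\V$-natural transformation $\tau \colon F \Rightarrow G$, whose transported components are $(N_*\tau)_A = N_\epsilon \then N\tau_A$, I would write out the $\W$-naturality square, expand the horizontal composites via the definitions of $\then_{N_*\D}$, $(N_*F)_{A,B} = NF_{A,B}$ and $(N_*G)_{A,B} = NG_{A,B}$, and then collapse everything to $N$ applied to the $\V$-naturality square of $\tau$ using naturality of $\widetilde N$, the unit coherence axioms of $N$, and functoriality of $N$. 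This is the longest chase but is structurally identical to the earlier ones, so I expect no surprises.

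\textbf{$2$-functoriality, and the main obstacle.}
It remains to check strict preservation of identities and composites. On $1$-cells, $N_*(\id_\C) = \id_{N_*\C}$ because $N\id_{\C(A,B)} = \id_{N\C(A,B)}$, and $N_*(F \then G)_{A,B} = N(F_{A,B} \then G_{FA,FB}) = NF_{A,B} \then NG_{FA,FB} = ((N_*F)\then(N_*G))_{A,B}$ by functoriality of $N$ (and on objects both sides are $F_\Ob \then G_\Ob$). On $2$-cells, preservation of identity $\V$-natural transformations, of vertical composition, and of horizontal composition (equivalently whiskering) all follow from the same one-line computations, since the transported component is always obtained by post-composing a $\V$-level formula with $N$, which commutes with the composition and tensor operations involved. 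I expect the only real difficulty to be organizational: in clause (i), laying out the associativity and both unit diagrams for $N_*\C$ so that each subregion is unambiguously attributed to exactly one of (naturality of $\widetilde N$), (a coherence axiom of $N$), or (an axiom of $\C$) — getting these decompositions right, rather than any conceptual subtlety, is where care is needed.
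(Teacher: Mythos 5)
The paper gives no proof of this proposition, deferring entirely to \cite[Props.~4.2.1--3, Thm.~4.2.4]{cruttwellNormed2008}, and your outline is a correct reconstruction of exactly that standard argument: each axiom of $N_*\C$, $N_*F$, and $N_*\tau$ decomposes into naturality squares of $\widetilde N$, the coherence axioms of $N$, and $N$ applied to the corresponding axiom of the original datum, with strict $2$-functoriality following from plain functoriality of $N$. Your decomposition matches the cited proof, so there is nothing to flag.
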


\noindent Change-of-base also respects monoidal transformations. The following is \cite[Prop.~4.3.1, Thm.~4.3.2]{cruttwellNormed2008}.

\begin{proposition} \label{prop:change-of-base-2-functorial}
    The change-of-base construction induces a 2-functor  $(-)_* \colon \MMoncat \to \TTwocat$. It sends a monoidal category $\V$ to the 2-category $\VCat$, a monoidal functor $N \colon \V \to \W$ to the 2-functor $N_* \colon \VCat \to \WCat$, and a monoidal transformation $\tau \colon N \Rightarrow M$ to the 2-natural transformation $\tau_* \colon N_* \Rightarrow M_*$ whose component $\tau_{*\C} \colon N_*\C \to M_*\C$ is the $\W$-functor with $(\tau_{*\C})_\Ob = \id_{\Ob(\C)}$ and $(\tau_{*\C})_{A,B} := \tau_{\C(A,B)}$.
\end{proposition}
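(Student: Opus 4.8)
The plan is to verify that $(-)_*$ respects the $0$-, $1$-, and $2$-cell structure and is functorial, reusing \cref{prop:change-of-base} for everything up to the level of $1$-cells. Concretely: on objects $(-)_*$ sends a monoidal category $\V$ to the $2$-category $\VCat$ (recalled above to be a $2$-category); on $1$-cells it sends a monoidal functor $N$ to the $2$-functor $N_* \colon \VCat \to \WCat$, already established in \cref{prop:change-of-base}. So the real content is (a) checking that the prescribed $2$-cell assignment $\tau \mapsto \tau_*$ is well-defined, and (b) checking the functoriality laws — preservation of identities, of composition of $1$-cells, and compatibility with vertical and horizontal composition of $2$-cells — together with local functoriality of each $\MMoncat(\V,\W) \to \TTwocat(\VCat,\WCat)$.

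For (a): given a monoidal transformation $\tau \colon N \Rightarrow M$ and a $\V$-category $\C$, I would first check that $\tau_{*\C}$, which is the identity on objects and has on-arrows maps $(\tau_{*\C})_{A,B} = \tau_{\C(A,B)}$, is a $\W$-functor $N_*\C \to M_*\C$. Preservation of composition is a rectangle whose one boundary is built from $\widetilde N$ and $N(\then_\C)$ and whose other boundary is built from $\widetilde M$ and $M(\then_\C)$, joined by components of $\tau$; this rectangle splits into the naturality square of $\tau$ at the $\V$-arrow $\then_\C \colon \C(A,B) \otimes \C(B,C) \to \C(A,C)$ and the monoidal-transformation axiom relating $\tau$, $\widetilde N$ and $\widetilde M$. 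Preservation of identities similarly combines naturality of $\tau$ at $\id_A \colon I \to \C(A,A)$ with the unit axiom relating $N_\epsilon$, $M_\epsilon$ and $\tau_I$. Next I would check that $\tau_*$, with $1$-cell components $\tau_{*\C}$, is a strict $2$-natural transformation: for a $\V$-functor $F \colon \C \to \D$ the two composites $N_*\C \to M_*\D$ agree on objects and, on arrows, reduce to naturality of $\tau$ at $F_{A,B} \colon \C(A,B) \to \D(FA,FB)$, so the naturality square of $1$-cells commutes on the nose; the $2$-dimensional condition, stated for a $\V$-natural transformation $\alpha \colon F \Rightarrow G$, unwinds via the definitions of $N_*\alpha$ and $M_*\alpha$ to the identity $N_\epsilon \then N\alpha_A \then \tau_{\D(FA,GA)} = M_\epsilon \then M\alpha_A$, which is naturality of $\tau$ at $\alpha_A$ composed with the unit axiom.

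For (b): $(\id_\V)_* = \id_{\VCat}$ is immediate because the identity monoidal functor carries identity structure maps. For $(N \then M)_* = N_* \then M_*$ I would unwind both sides on a $\V$-category $\C$: the hom-object $(MN)\C(A,B)$ is literally $M(N\C(A,B))$, and the composition and identity arrows of $(N \then M)_*\C$ are assembled from the composite monoidal structure on $MN$, which is by definition the two-step structure used by $N_*$ followed by $M_*$; the clauses for $\V$-functors and $\V$-natural transformations are analogous. Finally, preservation of identity $2$-cells and of vertical and horizontal composition of monoidal transformations all reduce to the same statements "one hom-object at a time", since $\tau_{*\C}$ is defined by applying $\tau$ to each hom-object and the three compositions of monoidal transformations and of $2$-natural transformations are all computed componentwise; this yields local functoriality, completing the argument.

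The step I expect to be the main obstacle is showing that $\tau_{*\C}$ preserves composition: it is the one place where the monoidal-transformation coherence axioms must be invoked jointly with the naturality of $\tau$, and some care is needed to see that they paste together into exactly the required square — in particular that the instance of $\widetilde N$, the composition arrow of $\C$, and the relevant $\tau$ components appear in a compatible order. Every other verification is a routine transport of equalities of $\V$-arrows through the functorial, componentwise definitions of $N_*$, $(-)_0$ and $\tau_*$.
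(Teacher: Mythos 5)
Your proof is correct: the decomposition of the $\W$-functor axiom for $\tau_{*\C}$ into the naturality of $\tau$ at $\then_\C$ (resp.\ $\id_A$) plus the monoidal-transformation axioms, the reduction of $2$-naturality to naturality of $\tau$ at $F_{A,B}$ and at $\alpha_A$, and the componentwise functoriality checks are exactly what is needed. The paper gives no proof of its own here, deferring to Cruttwell's thesis, and your direct verification is essentially the argument given there, so there is nothing to add.
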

\noindent Finally, symmetric monoidal structure transfers along the change-of-base, provided $N$ is symmetric.

\begin{proposition} \label{prop:change-of-base-monoidal-transfer}
    Let $N \colon \V \to \W$ be a symmetric monoidal functor, and $(\C, \boxtimes, J)$ a (symmetric) monoidal $\V$-category. Then $N_*\C$ is a (symmetric) monoidal $\W$-category with $\W$-functor $\boxtimes^N \colon N_*\C \otimes N_*\C \to N_*\C$ which maps objects by $(A,B) \mapsto A \boxtimes B$ and arrows according to 
    \[ 
        N\C(A_1,B_1) \otimes N\C(A_2,B_2) \overset{\widetilde N}{\longrightarrow} N(\C(A_1,B_1) \otimes \C(A_2,B_2)) \overset{N\boxtimes}{\longrightarrow} N\C(A_1 \boxtimes A_2, B_1 \boxtimes B_2).
    \]
    The unit is given by $J \in \Ob(\C) = \Ob(N_*\C)$ and the coherence isomorphisms have components $N_*a$, $N_*r$, $N_*l$ (and $N_*s$). Furthermore, $N$ induces a strict (symmetric) monoidal functor $N_*^0 \colon \C_0 \to (N_*\C)_0$ between the underlying categories that is identity-on-objects and sends $f \colon I_V \to \C(A,B)$ to $N_*f = N_\epsilon \then Nf$.
\end{proposition}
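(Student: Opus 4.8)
The plan is to derive the whole statement from a single fact: when $N$ is symmetric monoidal, the change-of-base 2-functor $N_* \colon \VCat \to \WCat$ of \Cref{prop:change-of-base} upgrades to a (symmetric) \emph{monoidal} 2-functor with respect to the monoidal products of \Cref{def:monoidal-prod-Vcats}, and the (symmetric) monoidal structure on $\C$ is then transported across it. Concretely, for $\V$-categories $\C, \D$ I would introduce the comparison $\W$-functor $\phi_{\C,\D} \colon N_*\C \otimes N_*\D \to N_*(\C \otimes \D)$, which is the identity on objects and is given on homs by $\widetilde N \colon N\C(A_1,B_1) \otimes N\D(A_2,B_2) \to N((\C \otimes \D)((A_1,A_2),(B_1,B_2)))$, together with $\phi_0 \colon I_\WCat \to N_* I_\VCat$, the identity on the single object with $N_\epsilon$ on its hom. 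The crux is to check that $\phi_{\C,\D}$ is genuinely a $\W$-functor: preservation of identities is the unit axiom of $N$, while preservation of composition must reconcile the shuffle isomorphism $m$ (built from $\alpha, \sigma$ in $\W$) occurring in the composition law of $N_*\C \otimes N_*\D$ with the image under $N$ of the corresponding shuffle in $\V$ used inside $\C \otimes \D$; this is exactly where the symmetry axiom $\widetilde N \then N\sigma = \sigma \then \widetilde N$ is used, together with the associativity axiom of $N$ and naturality of $\widetilde N$. Granting this, one verifies that $(\phi, \phi_0)$ satisfies the associativity hexagon and the two unit triangles of a monoidal 2-functor — each a pointwise instance of the coherence axioms of $N$ — and, using the symmetry axiom once more, that $\phi$ is compatible with the braidings on $\VCat$ and $\WCat$.

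With $N_*$ monoidal in hand, the structure transfers formally: the data $(\boxtimes, J, a, l, r)$ (and $s$) of the monoidal $\V$-category $\C$ present a (symmetric) pseudomonoid in the monoidal 2-category $\VCat$, and applying the (symmetric) monoidal 2-functor $N_*$ yields one in $\WCat$. Its multiplication is $\boxtimes^N := \phi_{\C,\C} \then N_*\boxtimes$, which on homs is exactly $\widetilde N$ followed by $N\boxtimes$ as in the statement; its unit object is $J \in \Ob(\C) = \Ob(N_*\C)$; and its coherence cells are $N_*a, N_*l, N_*r$ (and $N_*s$), which by \Cref{prop:change-of-base}(iii) have components $N_\epsilon \then N a_{A,B,C}$, etc., and which remain invertible because the 2-functor $N_*$ preserves invertible 2-cells. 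The only thing to unwind here is that these 2-cells connect the expected composites of $\boxtimes^N$, which follows from the coherence of $\phi$ established above.

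It remains to handle the underlying categories. Writing $\C_0 = \VCat(I_\VCat,\C)$ and $(N_*\C)_0 = \WCat(I_\WCat, N_*\C)$, define $N_*^0 \colon \C_0 \to (N_*\C)_0$ as $\phi_0 \then N_*(-)$ on global points; explicitly it is the identity on objects and sends $f \colon I_\V \to \C(A,B)$ to $N_\epsilon \then N f \colon I_\W \to N\C(A,B)$, and it is automatically a functor, being a composite of the hom-functor induced by the 2-functor $N_*$ with whiskering by $\phi_0$. Strict preservation of the monoidal product is the identity $N_*^0(f \bar\boxtimes_0 g) = N_*^0(f)\, \overline{\boxtimes^N}_0 \,N_*^0(g)$ on global points, a short diagram chase using naturality of $\widetilde N$, the unit axioms of $N$, and the definitions of the two tensors; strict preservation of coherence is immediate, since the associator of $\C_0$ is $(a_{A,B,C})$, that of $(N_*\C)_0$ is $(N_*a)_{A,B,C} = (N_\epsilon \then N a_{A,B,C})$, and likewise for $l, r$ (and $s$), so $N_*^0$ carries each structure map of $\C_0$ on the nose to the corresponding one of $(N_*\C)_0$. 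Finally, because $N_*^0$ is a functor that is the identity on objects and strictly preserves $\boxtimes$ and all coherence maps, the pentagon, triangles (and, in the symmetric case, the hexagons and $s \then s = \id$) for $((N_*\C)_0, \overline{\boxtimes^N}_0, J)$ at any given objects are precisely the $N_*^0$-images of the same axioms in $(\C_0, \bar\boxtimes_0, J)$, which hold by hypothesis; naturality of the coherence cells in $(N_*\C)_0$ transfers in the same way from the $\W$-naturality of $N_*a$, etc. The one genuinely non-formal step throughout is composition-preservation of $\phi_{\C,\D}$, and it is there, and only there, that symmetry of $N$ is indispensable.
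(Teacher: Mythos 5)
Your proposal is correct and takes essentially the same route as the paper: you build $\boxtimes^N$ as the composite of the comparison $\W$-functor induced by $\widetilde N$ (whose preservation of composition is exactly where the symmetry of $N$ enters, via the shuffle $m$), take the coherence cells to be $N_*a$, $N_*l$, $N_*r$ (and $N_*s$), and reduce all coherence axioms to the strict (symmetric) monoidality of $N_*^0 \colon \C_0 \to (N_*\C)_0$ — which is precisely the paper's argument, the pseudomonoid-transport framing being the ``conceptual proof'' the paper attributes to Cruttwell. The only quibble is your closing claim that symmetry of $N$ is indispensable \emph{only} for composition-preservation of $\phi_{\C,\D}$: as your own second paragraph implicitly acknowledges, it is also needed to make $\phi$ compatible with the braidings, i.e.\ to give $N_*s$ the correct source and target functors.
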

\begin{proof}[Proof (see \cref{app:transfer-SMC-symm-N} for details)]
    Cruttwell provides a conceptual proof in \cite[Thm.~5.7.1]{cruttwellNormed2008} for transferring monoidal structure using that monoidal $\V$-categories are pseudomonoids in $\VCat$ and thus are preserved by the monoidal $(-)_*$. We give a direct explanation. When $N$ is symmetric, $\widetilde N \colon N(-) \bullet N(=) \to N(- \otimes =)$ is a monoidal transformation \cite[Prop.~5.3.6]{cruttwellNormed2008} and hence induces $\W$-functor $(\widetilde N)_* \colon N_*\C \otimes N_*\C \to N_*(\C \otimes \C)$. Thus, $\boxtimes^N$ can be obtained as the composition of $\W$-functors $(\widetilde N)_* \then (N_*\boxtimes)$.
    Using the fact that $N$ is symmetric monoidal, one can show that the proposed coherence isomorphisms have the correct source and target functors.
    To see that they satisfy the SMC axioms, one observes that $N_*^0$ is a strict monoidal functor: For $f \in \C_0(A,B)$ and $g \in \C_0(B,C)$ one has $N_*(f \then g) = (N_* f) \then (N_* g)$. Similarly, for $f_i \in \C_0(A_i, B_i)$ one has $N_*(f_1 \bar \boxtimes_0 f_2) = (N_*f_1) \bar \boxtimes^N_0 (N_*f_1)$. Moreover, when $\C$ is symmetric, so is $N_*^0$, since $N_*^0s = N_*s$. This uses the definitions, the fact that $N$ is monoidal, and properties of SMCs,
    \[
    \begin{tikzcd}[row sep = small]
    	{I_\W} & {I_\W \otimes I_\W} & {NI_\V \otimes NI_\V} & {N\C(A_1,B_1) \otimes N\C(A_2,B_2)} \\
    	& {I_\W \otimes NI_\V} \\
    	{N(I_\V)} && {N(I_\V \otimes I_\V)} & {N(\C(A_1,B_1) \otimes \C(A_2,B_2))} \\
    	&&& {N(\C(A_1\boxtimes A_2, B_1 \boxtimes B_2))}
    	\arrow["{\lambda^{-1}}", from=1-1, to=1-2]
    	\arrow["{N_\epsilon}"', from=1-1, to=3-1]
    	\arrow["{N_\epsilon \otimes N_\epsilon}", from=1-2, to=1-3]
    	\arrow[from=1-2, to=2-2]
    	\arrow["{Nf_1 \otimes Nf_2}", from=1-3, to=1-4]
    	\arrow["{\widetilde N}", from=1-3, to=3-3]
    	\arrow["{(\text{nat } \widetilde N)}"{description}, draw=none, from=1-3, to=3-4]
    	\arrow["{\widetilde N}", from=1-4, to=3-4]
    	\arrow["{(\text{nat } \lambda)}"{description}, draw=none, from=2-2, to=1-1]
    	\arrow[""{name=0, anchor=center, inner sep=0}, from=2-2, to=1-3]
    	\arrow["\lambda"{description}, from=2-2, to=3-1]
    	\arrow["{(N \text{ monoidal})}"{description}, draw=none, from=2-2, to=3-3]
    	\arrow["{N(\rho^{-1})}"', from=3-1, to=3-3]
    	\arrow["{N(f_1 \otimes f_2)}"', from=3-3, to=3-4]
    	\arrow["{N(*)}", from=3-4, to=4-4]
    	\arrow[draw=none, from=1-2, to=0]
    \end{tikzcd}
    \]
    where $(*)$ denotes either $\then$ or $\boxtimes$. Remarkably, it does not require $\W$-functoriality of $\boxtimes^N$. 
    It follows that the coherence diagrams of $\C_0$ also hold for $N_*a$, $N_*r$, $N_*l$ and $N_*s$ in $(N_*\C)_0$ when mapped via $N_*^0$.
\end{proof}

\section{Uncertainty with external parametrization}\label{sec:parametric-uncertainty}

This section applies the change-of-base techniques of \cref{sec:background} to equip SMCs with parametric uncertainty. 
We start by describing how to represent uncertainty semantics using symmetric monoidal monads.
Next, we introduce the partial slice construction used to obtain external parametrization. 
Finally, we present our main construction which replaces the arrows of a symmetric monoidal $\V$-category $\C$ by parametric maps $A \to \C(X,Y)$ in the Markov category arising from an uncertainty monad.
We further explain how structures of the original SMC can be transferred to the resulting 2-category.

\subsection{Uncertainty monads}\label{subsec:uncertainty-monads}

Many uncertainty semantics can be viewed as generalized collections of deterministic objects. 
For instance, subsets, intervals, and distributions of objects follow this pattern. Valuing maps in such generalized collections provides a model for uncertain processes.
This idea is formalized by symmetric monoidal monads \cite[Section 3]{fritzSynthetic2020} whose Kleisli categories often form Markov categories. 
The Markov category axioms (see \cite[Section 2]{fritzSynthetic2020} or \cref{app:markov-categories}) capture how such uncertain processes compose.

\begin{definition}[Symmetric monoidal monad]
    Let $\V$ be an SMC. A monad $\monadTup$ on $\V$ with multiplication $\monadMul$ and unit $\monadUnit$
    is called \emph{symmetric monoidal} if it comes equipped with morphisms
    $$
    \symMonadMorOf{X}{Y} \colon \monadM(X) \otimes \monadM(Y) \to \monadM(X \otimes Y)
    $$
   that are natural in $X$ and $Y$, make $\monadM$ into a symmetric monoidal functor with unit $\monadUnit_I$, and make $\monadMul$ and $\monadUnit$ into monoidal transformations.
    If $I$ is terminal in $\V$, and $\monadM I \cong I$, we call $\monadM$ \emph{affine}.
\end{definition}

\noindent Kleisli categories of symmetric monoidal monads are SMCs. The following is \cite[Prop.~3.1, Cor.~3.2]{fritzSynthetic2020}.

\begin{proposition} \label{prop:monad-markov-category}
    Given a symmetric monoidal monad $(\monadM, \monadUnit, \monadMul, \symMonadMor)$ on $\V$, its Kleisli category $\Kl_\monadM$ is symmetric monoidal under the product that sends $f \colon A \to \monadM X$ and $g \colon B \to \monadM Y$ to 
    \[
    A \otimes B \overset{f \otimes g}{\longrightarrow} \monadM X \otimes \monadM Y \overset{\symMonadMor}{\longrightarrow} \monadM(X \otimes Y).
    \]
    Moreover, the identity-on-objects functor $\leftFunctor_\monadM \colon \V \to \Kl_\monadM$ that sends $f \mapsto f \then \monadUnit$ is strict symmetric monoidal. Furthermore, if $\V$ is a Markov category and $\monadM$ affine, then $\Kl_\monadM$ becomes a Markov category with copy and delete maps given by $\leftFunctor_\monadM(\copyMor_X)$ and $\leftFunctor_\monadM(\delMor_X)$.
\end{proposition}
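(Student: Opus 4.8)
The plan is to transfer the symmetric monoidal structure of $\V$ to $\Kl_\monadM$ along the identity-on-objects functor $\leftFunctor_\monadM$, using the monoidal-monad axioms for the coherence bookkeeping. First I would verify that the product in the statement is a bifunctor $\Kl_\monadM \times \Kl_\monadM \to \Kl_\monadM$. Preservation of identities is exactly the axiom $(\monadUnit_X \otimes \monadUnit_Y) \then \symMonadMorOf{X}{Y} = \monadUnit_{X \otimes Y}$ saying that $\monadUnit$ is a monoidal transformation. Preservation of Kleisli composition is the first genuine computation: unfolding both $(f_1 \otimes f_2) \then_{\Kl_\monadM} (g_1 \otimes g_2)$ and $(f_1 \then_{\Kl_\monadM} g_1) \otimes (f_2 \then_{\Kl_\monadM} g_2)$ into $\V$, one rewrites the first into the second using naturality of $\symMonadMor$, bifunctoriality of $\otimes$ in $\V$, and the axiom $\symMonadMorOf{\monadM X}{\monadM Y} \then \monadM\symMonadMorOf{X}{Y} \then \monadMul_{X \otimes Y} = (\monadMul_X \otimes \monadMul_Y) \then \symMonadMorOf{X}{Y}$ expressing that $\monadMul$ is monoidal. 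The unit of $\Kl_\monadM$ is $I$, just as in $\V$.

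Next I would take the coherence isomorphisms of $\Kl_\monadM$ to be the images $\leftFunctor_\monadM(\alpha)$, $\leftFunctor_\monadM(\lambda)$, $\leftFunctor_\monadM(\rho)$, $\leftFunctor_\monadM(\sigma)$ of those of $\V$; they are invertible in $\Kl_\monadM$ since $\leftFunctor_\monadM$ is a functor. The central lemma is that, with this choice, $\leftFunctor_\monadM$ is a \emph{strict} symmetric monoidal functor: it is identity-on-objects, it sends the coherators of $\V$ to the chosen coherators of $\Kl_\monadM$ by construction, and $\leftFunctor_\monadM(f) \otimes \leftFunctor_\monadM(g) = \leftFunctor_\monadM(f \otimes g)$ follows from monoidality of $\monadUnit$ together with bifunctoriality of $\otimes$ in $\V$. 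Granting this, the pentagon, triangle, hexagon, and $\sigma \then \sigma = \id$ equations for $\Kl_\monadM$ are obtained simply by applying $\leftFunctor_\monadM$ to the corresponding equations in $\V$. Naturality of the coherators with respect to \emph{arbitrary} Kleisli morphisms — which does not follow formally from naturality in $\V$ — I would derive from the identities $u \then_{\Kl_\monadM} \leftFunctor_\monadM(h) = u \then \monadM h$ and $\leftFunctor_\monadM(h) \then_{\Kl_\monadM} v = h \then v$, valid for any $\V$-map $h$ by the monad laws and naturality of $\monadUnit$. Substituting these into the naturality square for, say, $\alpha$ collapses it to an equation in $\V$ that holds by naturality of $\alpha$ in $\V$ together with the associativity coherence of $\monadM$ as a monoidal functor; the cases $\lambda$, $\rho$, $\sigma$ are analogous, using the unit and symmetry coherences of $\monadM$.

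For the last statement, assume $\V$ is a Markov category and $\monadM$ is affine. Then $\Kl_\monadM(X, I) \cong \V(X, \monadM I) \cong \V(X, I)$ is a singleton, so $I$ is terminal in $\Kl_\monadM$; in particular $\leftFunctor_\monadM(\delMor_X)$ is the unique map $X \to I$, so $\delMor^{\Kl} := \leftFunctor_\monadM(\delMor)$ is automatically natural. Since $\leftFunctor_\monadM$ is strict symmetric monoidal, it carries the commutative-comonoid structure $(\copyMor_X, \delMor_X)$ on each $\V$-object to a commutative-comonoid structure $(\leftFunctor_\monadM(\copyMor_X), \leftFunctor_\monadM(\delMor_X))$ in $\Kl_\monadM$ satisfying coassociativity, cocommutativity, counitality, and the tensor-compatibility $\copyMor_{X \otimes Y} = (\copyMor_X \otimes \copyMor_Y) \then (\id \otimes \sigma \otimes \id)$ — all of which are equations among tensors and composites of $\copyMor$, $\delMor$ and coherators, hence preserved. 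Together with terminality of $I$, this is exactly the data and axioms of a Markov category, so $\Kl_\monadM$ is Markov with the stated copy and delete maps.

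I expect the bifunctoriality of the product on $\Kl_\monadM$ — its compatibility with Kleisli composition in particular — to be the main obstacle, since it is the one step that uses essentially all of the monoidal-monad coherence data at once; once strictness of $\leftFunctor_\monadM$ is established, everything else reduces to equations already available in $\V$.
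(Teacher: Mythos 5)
Your proof is correct and follows the standard argument: the paper itself defers to Fritz (Prop.~3.1, Cor.~3.2 of \cite{fritzSynthetic2020}), and your route --- bifunctoriality of the Kleisli tensor via naturality of $\symMonadMor$ and monoidality of $\monadMul$, strictness of $\leftFunctor_\monadM$ via monoidality of $\monadUnit$, transfer of coherence and comonoid data along $\leftFunctor_\monadM$, and terminality of $I$ from affineness --- is exactly that argument. The care you take with naturality of the coherators against arbitrary Kleisli morphisms (via $u \then_{\Kl_\monadM} \leftFunctor_\monadM(h) = u \then \monadM h$ and $\leftFunctor_\monadM(h) \then_{\Kl_\monadM} v = h \then v$) is the one point often glossed over, and you handle it correctly.
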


\noindent Since many common categories such as $\Set$ are Cartesian and hence Markov categories, \cref{prop:monad-markov-category} provides a convenient way introduce uncertainty. Moreover, by the strictification theorem for Markov categories \cite[Prop.~10.16, Thm.~10.17]{fritzSynthetic2020} we will assume without loss of generality that $\Kl_\monadM$ is a strict monoidal category whenever it is convenient. We now provide some key examples.

\begin{example}
    The covariant nonempty powerset functor $\PowNoEmpty \colon \Set \to \Set$ is an affine monad with $\mu$ given by union, and $\eta$ mapping elements to singletons. It is symmetric monoidal under $\symMonadMor_{X,Y}$ which sends pairs of subsets to their Cartesian product. $\Kl_\PowNoEmpty$ consists of sets and multi-valued functions.
\end{example}

\begin{example}
    The Giry monad $\Dist \colon \Meas \to \Meas$, which sends a measurable space $(X, \SigAlg_X)$ to the space of probability measures on it (with an appropriate $\sigma$-algebra), forms an affine symmetric monoidal monad \cite[Section 4]{fritzSynthetic2020}. It's Kleisli category is $\Stoch$, the category of Markov kernels.
\end{example}

\begin{example}
    The arrow functor $\Arr \colon \Pos \to \Pos$ maps each poset to the set of intervals $\interval{a}{b}$, ordered by their end-points: $\interval{a}{b} \posetleq \interval{c}{d}$ iff $a \posetleq c \wedge b \posetleq d$. 
    It forms an affine symmetric monoidal monad with unit $\monadUnit_\posetP: a \elementTo \interval{a}{a}$, multiplication $\monadMul_\posetP \colon \interval{\interval{a}{b}}{\interval{c}{d}} \elementTo \interval{a}{d}$, and $\symMonadMorOf{\posetP}{\posetQ}: \interval{a}{b} \otimes \interval{c}{d} \elementTo \interval{a \otimes c}{b \otimes d}$.
\end{example}

\begin{counterexample}
    The twisted arrow functor $\TwiArr: \Pos \to \Pos$ orders intervals by inclusion: $\interval{a}{b} \posetleq \interval{c}{d}$ iff $c \posetleq a \wedge b \posetleq d.$ Although it is of interest as an uncertainty semantics \cite{censiUncertainty2017}, one cannot define natural transformations $\monadMul$ and $\monadUnit$ to make it into a monad.
\end{counterexample}
\noindent Using change-of-base, we can endow the hom-sets of an SMC with uncertainty.

\begin{theorem} \label{thm:monad-uncertainty}
    Given a $\V$-category $\C$ and a symmetric monoidal monad $(\monadM, \monadUnit, \monadMul, \symMonadMor)$ on $\V$, there is a $\V$-category $\monadM_*\C$ that has the same objects as $\C$, hom-objects given by $\monadM \C(X,Y)$, composition $\symMonadMor \then \monadM(\then_\C)$, and identities $\eta_I \then \monadM(\id_A)$. There is an identity-on-objects $\V$ functor $\iota \colon \C \to \monadM_* \C$ that maps arrows via $\eta_{\C(X,Y)} \colon \C(X,Y) \to \monadM \C(X,Y)$. If $\C$ is (symmetric) monoidal, then so is $\monadM_* \C$, and the underlying functor $\iota_0 \colon \C_0 \to (\monadM \C)_0$ is strict (symmetric) monoidal.
\end{theorem}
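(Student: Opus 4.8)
The plan is to obtain \cref{thm:monad-uncertainty} as a direct corollary of the change-of-base machinery already assembled, by recognizing $\monadM_*\C$ as change-of-base along the strict symmetric monoidal functor $\leftFunctor_\monadM \colon \V \to \Kl_\monadM$ composed suitably — or more directly, by noting that a symmetric monoidal monad $\monadM$ on $\V$ is in particular a (lax) symmetric monoidal \emph{endofunctor} $(\monadM, \eta_I, \symMonadMor) \colon \V \to \V$, so \cref{prop:change-of-base}, \cref{prop:change-of-base-2-functorial}, and \cref{prop:change-of-base-monoidal-transfer} apply verbatim with $\W = \V$ and $N = \monadM$. First I would unwind the definitions: the hom-objects $(\monadM_*\C)(X,Y) = \monadM\C(X,Y)$, the composition $\symMonadMor \then \monadM(\then_\C)$, and the identities $\eta_I \then \monadM(\id_A)$ are exactly the formulas produced by change-of-base in \cref{prop:change-of-base}(i) once one reads $\widetilde N = \symMonadMor$ and $N_\epsilon = \eta_I$, which are precisely the monoidal-functor coherence data that the definition of symmetric monoidal monad equips $\monadM$ with. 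Hence $\monadM_*\C$ is a $\V$-category with no further work.

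Next I would handle the functor $\iota$. The monad unit $\eta \colon \Id_\V \Rightarrow \monadM$ is, by hypothesis, a monoidal transformation from the identity monoidal functor to $(\monadM,\eta_I,\symMonadMor)$; applying the 2-functor $(-)_*$ of \cref{prop:change-of-base-2-functorial} (or rather, applying \cref{prop:change-of-base-monoidal-transfer} at the level of the induced functor between underlying categories) yields an identity-on-objects $\V$-functor $\iota \colon \C = (\Id_\V)_*\C \to \monadM_*\C$ whose on-arrows component at $(X,Y)$ is $\eta_{\C(X,Y)} \colon \C(X,Y) \to \monadM\C(X,Y)$, exactly as claimed. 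One should check this is genuinely a $\V$-functor, i.e. that $\eta$ being a monoidal transformation gives the two $\V$-functoriality squares (preservation of $\then$ and of $\id$); these are the naturality of $\eta$ together with the two axioms expressing that $\eta$ is monoidal (compatibility with $\symMonadMor$ and with $\eta_I$), so no real computation is needed beyond citing those axioms.

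For the monoidal part: when $(\C,\boxtimes,J)$ is a (symmetric) monoidal $\V$-category and $\monadM$ is a \emph{symmetric} monoidal monad — hence a symmetric monoidal endofunctor — \cref{prop:change-of-base-monoidal-transfer} applied with $N = \monadM$ immediately gives that $\monadM_*\C$ is a (symmetric) monoidal $\V$-category, with tensor $\boxtimes^\monadM$ acting on hom-objects by $\symMonadMor \then \monadM(\boxtimes)$, coherence isomorphisms $\monadM_* a, \monadM_* r, \monadM_* l$ (and $\monadM_* s$), and a strict (symmetric) monoidal functor $\iota_0 = \monadM_*^0 \colon \C_0 \to (\monadM_*\C)_0$ on underlying categories sending $f \colon I_\V \to \C(X,Y)$ to $\eta_I \then \monadM f = \eta_{\C(X,Y)} \circ f$. (The last equality is just naturality of $\eta$.) I would spell out that $\iota_0$ and $\monadM_*^0$ coincide because both are identity-on-objects and both postcompose with $\eta$; this is where the two halves of the theorem — the functor $\iota$ and the monoidal-structure transfer — get glued together.

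The main obstacle, such as it is, is bookkeeping rather than mathematics: one must verify that the \emph{specific} coherence data the paper bundled into "symmetric monoidal monad" (namely $\eta_I$ as the monoidal-unit map and $\symMonadMor$ as the laxator, together with the axioms that $\mu$ and $\eta$ are monoidal transformations) are exactly the data and axioms that \cref{prop:change-of-base} and \cref{prop:change-of-base-monoidal-transfer} consume — in particular that the associativity and unitality laws for the lax monoidal functor $(\monadM,\eta_I,\symMonadMor)$ follow from the symmetric-monoidal-monad axioms (they do, being part of the requirement that $\monadM$ "be a symmetric monoidal functor with unit $\eta_I$"). Once that identification is made explicit, the theorem is a transcription of the three cited propositions, and I would relegate the remaining diagram-level checks to \cref{app:transfer-SMC-symm-N} in the same spirit as the proof of \cref{prop:change-of-base-monoidal-transfer}. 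I would also remark that the $\mu$-is-monoidal axiom is not needed for this particular statement — only the functor-and-$\eta$ data are used — which parenthetically explains why affineness of $\monadM$ plays no role here.
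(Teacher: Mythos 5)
Your proposal is correct and follows essentially the same route as the paper: it reads the symmetric monoidal monad as a lax symmetric monoidal endofunctor with laxator $\symMonadMor$ and unit $\monadUnit_I$, invokes \cref{prop:change-of-base} and \cref{prop:change-of-base-monoidal-transfer} with $N = \monadM$, and obtains $\iota$ from the fact that $\monadUnit$ is a monoidal transformation. The extra observations you make (e.g.\ that the $\monadMul$-monoidality axiom is not needed here) are accurate but do not change the argument.
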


\begin{proof}
    Since $\monadM$ is a symmetric monoidal functor with strength $\widetilde \monadM = \symMonadMor$ and unit $\monadM_\epsilon = \monadUnit_I$, we can directly apply \cref{prop:change-of-base,prop:change-of-base-monoidal-transfer}. $\V$-functoriality of $\iota$ holds since $\monadUnit$ is a monoidal transformation.
\end{proof}

\begin{example}[Imprecise probability theory]
    Imprecise probability theories incorporate uncertainty about the choice of probabilities during statistical modeling \cite{walleyStatistical1991, halpernReasoning2005}. One general version considers subsets of probability measures. This idea is realized by $\PowNoEmpty_* \Stoch$, whose arrows are subsets of Markov kernels that compose by taking the set of possible composites. The strict symmetric monoidal inclusion $\iota_0 \colon \Stoch \to \PowNoEmpty_* \Stoch$ allows us to transfer the supply of comonoids $\copyMor$ and $\delMor$. Since $\{k : X \to Y\} \then \iota_0(\delMor_Y) = \{k \then \delMor_Y \} = \{ \delMor_X \} = \iota_0(\delMor_X)$, the result is again a Markov category. 
    Other common flavors of imprecision are captured by our constructions. For instance, applying $\Arr$ to the $\Pos$-category of sub-probability measures (ordered point-wise) yields upper and lower probabilities, while robust Bayesian methods can be modeled using the parametric uncertainty introduced in \cref{ex:external-parametrization}.
\end{example}

\subsection{Partial slice functor}\label{subsec:slice-functor}
This section defines the partial slice functor $\U \sliceFunctor \colon \W \to \CCat$, which will be used to apply parameterization to the hom-sets of a category. We prove that it is monoidal when $\U$ is strict, and demonstrate how SMC structure transfers along it.

\begin{definition}[Partial slice functor]\label{def:slice-functor}
    Given a category $\W$ and a subcategory $i \colon \U \hookrightarrow \W$, the \emph{partial slice functor} $\U \sliceFunctor \colon \W \functorArr \smcat$ sends each object $A \in \Ob(\W)$ to the comma category $\commacat{i}{\Delta_A}$ with
    \begin{compactitem}
        \item[(i)] objects given by pairs $(U \in \Ob(\U),f \colon i(U) \to A)$,
        \item[(ii)] arrows $\varphi \colon (U,f) \to (V,g)$ given by $\U$-arrows $\varphi \colon U \to V$ satisfying $f = i(\varphi) \then g$. 
    \end{compactitem}On arrows, $\U \sliceFunctor$ sends each $f \colon A \to B$ in $\W$ to the post-composition functor $\U \sliceWith{f} \colon \U\sliceWith{A} \functorArr \U\sliceWith{B}$. 
\end{definition}

\begin{proposition}
    Let $(\W, \prodW, \unitW)$ be an SMC and $i: \U \hookrightarrow \W$ a full strict monoidal subcategory. Then the partial slice functor $\U \sliceFunctor \colon \W \to \smcat$ is monoidal under the following comparison arrows in $\smcat$:
    \begin{compactitem}
        \item[(i)] The functor $\U \sliceFunctor_\epsilon \colon \unitCat \functorArr \U \sliceWith{\unitW}$ maps the single object $*$ to the identity $\idUnitW \colon \unitW \to \unitW$.
        \item[(ii)] The natural transformation $\widetilde \U \sliceFunctor_{A_1,A_2} \colon \U \sliceWith{A_1} \prodSmcat \U \sliceWith{A_2} \functorArr \U \sliceWith{(A_1 \prodW A_2)}$ maps objects $f_1 \colon U_1 \to A_1$ and $f_2 \colon U_2 \to A_2$ to $f_1 \otimes f_2 \colon U_1 \otimes U_2 \to A_1 \otimes A_2$, and arrows $\varphi \colon f_1 \to g_1$ and $\psi \colon f_2 \to g_2$ to $\varphi \otimes \psi$.
    \end{compactitem}
    Moreover, $\U \sliceFunctor$ is symmetric up to the natural invertible 2-cells $\sigma_{U_1,U_2}$.
\end{proposition}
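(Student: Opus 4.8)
\noindent The plan is to verify the lax monoidal functor axioms for the stated comparison data, and then to see that the symmetry constraint holds only up to a coherent natural isomorphism. The first step is to check that $\widetilde{\U\sliceFunctor}_{A_1,A_2}$ is a well-defined functor: on an object $(f_1\colon U_1\to A_1,\, f_2\colon U_2\to A_2)$ it outputs $f_1\otimes f_2\colon U_1\otimes U_2\to A_1\otimes A_2$, which is an object of $\U\sliceWith{(A_1\otimes A_2)}$ because the monoidal subcategory $\U$ is closed under $\otimes$; and on a pair of reparametrizations $\varphi\colon f_1\to g_1$, $\psi\colon f_2\to g_2$, the assertion that $\varphi\otimes\psi$ is an arrow $f_1\otimes f_2\to g_1\otimes g_2$ of $\U\sliceWith{(A_1\otimes A_2)}$ unwinds to $f_1\otimes f_2=(\varphi\otimes\psi)\then(g_1\otimes g_2)$, which is the interchange law $(\varphi\otimes\psi)\then(g_1\otimes g_2)=(\varphi\then g_1)\otimes(\psi\then g_2)$ in $\W$ applied to $f_1=\varphi\then g_1$, $f_2=\psi\then g_2$. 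Functoriality of $\widetilde{\U\sliceFunctor}_{A_1,A_2}$ and its naturality in $A_1,A_2$ then follow from the bifunctoriality of $\otimes$, and $\U\sliceFunctor_\epsilon$ is well defined since $\unitW\in\Ob(\U)$, so that $\idUnitW$ is an object of $\U\sliceWith{\unitW}$.

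\noindent \textbf{Lax monoidal axioms.} Because $\U$ is a \emph{strict} monoidal subcategory, $(U_1\otimes U_2)\otimes U_3$ and $U_1\otimes(U_2\otimes U_3)$ are literally the same object (and likewise $\unitW\otimes U=U=U\otimes\unitW$), and the coherence isomorphisms $\alpha^\W,\lambda^\W,\rho^\W$ of $\W$ are identities on objects of $\U$. Granting this, tracing an object $((f_1,f_2),f_3)$ around the associativity coherence square: one leg yields $\bigl((f_1\otimes f_2)\otimes f_3\bigr)\then\alpha^\W_{A_1,A_2,A_3}$ (the action of $\U\sliceFunctor$ on the $\W$-morphism $\alpha^\W_{A_1,A_2,A_3}$) and the other yields $f_1\otimes(f_2\otimes f_3)$; naturality of $\alpha^\W$ at $(f_1,f_2,f_3)$ rewrites the first leg as $\alpha^\W_{U_1,U_2,U_3}\then\bigl(f_1\otimes(f_2\otimes f_3)\bigr)$, which equals $f_1\otimes(f_2\otimes f_3)$ since $\alpha^\W_{U_1,U_2,U_3}=\id$; on arrows both legs give $(\varphi_1\otimes\varphi_2)\otimes\varphi_3=\varphi_1\otimes(\varphi_2\otimes\varphi_3)$. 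The two unit triangles are handled the same way using naturality of $\lambda^\W$ and $\rho^\W$, and the associativity and unit isomorphisms of the Cartesian product on $\smcat$ merely reassociate tuples and are strictly respected by the functors involved. Hence $\U\sliceFunctor$ is lax monoidal.

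\noindent \textbf{Symmetry up to a 2-cell.} Tracing $(f_1,f_2)$ around the symmetry square, one leg gives $(f_1\otimes f_2)\then\sigma^\W_{A_1,A_2}$, an object of $\U\sliceWith{(A_2\otimes A_1)}$ over the parameter $U_1\otimes U_2$, while the other gives $f_2\otimes f_1$ over the parameter $U_2\otimes U_1$; since strictness does not identify $U_1\otimes U_2$ with $U_2\otimes U_1$, this square cannot commute on the nose. The claim is that the symmetry $\sigma^\W_{U_1,U_2}\colon U_1\otimes U_2\to U_2\otimes U_1$ of $\W$ at the parameter objects provides the component at $(f_1,f_2)$ of a natural isomorphism between the two legs, which is the 2-cell $\sigma_{U_1,U_2}$ of the statement: it is an arrow of $\U$ by \emph{fullness}, and naturality of $\sigma^\W$ at $(f_1,f_2)$ reads $(f_1\otimes f_2)\then\sigma^\W_{A_1,A_2}=\sigma^\W_{U_1,U_2}\then(f_2\otimes f_1)$, which is exactly the condition that $\sigma^\W_{U_1,U_2}$ be an arrow $(f_1\otimes f_2)\then\sigma^\W_{A_1,A_2}\to f_2\otimes f_1$ in $\U\sliceWith{(A_2\otimes A_1)}$; naturality of this family in $(f_1,f_2)$ is again naturality of $\sigma^\W$ (applied to reparametrizations $\varphi_1,\varphi_2$), invertibility follows from $\sigma^\W_{U_1,U_2}\then\sigma^\W_{U_2,U_1}=\id$ (the inverse lying in $\U$ again by fullness), and the coherence of these 2-cells with the associativity comparison reduces to the hexagon axioms for $\sigma^\W$. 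The one genuinely delicate point is this relaxation of symmetry to a 2-cell while associativity and unit stay strict equalities, together with keeping track of which hypothesis each step consumes: strictness of $\U$ for the object-level identifications of parameters and the vanishing of $\alpha^\W,\lambda^\W,\rho^\W$ on $\U$; fullness for $\sigma^\W_{U_1,U_2}$ living in the slice; and naturality of the coherences of $\W$ for the arrow-level equalities. Everything else is routine unwinding of the bifunctoriality of $\otimes$.
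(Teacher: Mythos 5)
Your proof is correct and follows essentially the same route as the paper: functoriality and naturality of $\widetilde{\U\sliceFunctor}$ from bifunctoriality of $\otimes$, the associativity and unit axioms from strictness of $\U$ combined with naturality of $\alpha^\W,\lambda^\W,\rho^\W$, and symmetry holding only up to the natural invertible 2-cells $\sigma_{U_1,U_2}$. You are in fact slightly more careful than the paper in spelling out why $\varphi\otimes\psi$ is a legitimate slice arrow and in noting that \emph{fullness} of $\U$ is what puts $\sigma^\W_{U_1,U_2}$ inside the slice category, a point the paper leaves implicit.
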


\begin{proof}
    $\U \sliceFunctor_\epsilon$ is obviously a functor, while $\widetilde \U \sliceFunctor_{A_1,A_2}$ inherits functoriality from $\otimes$. 
    For naturality and symmetry we require
    \[
    \begin{tikzcd}[cramped, row sep = small]
    	{\U \sliceWith{A_1} \times \U \sliceWith{A_2}} & {\U \sliceWith{(A_1 \otimes A_2)}} \\
    	{\U \sliceWith{B_1} \times \U \sliceWith{B_2}} & {\U \sliceWith{(B_1 \otimes B_2)}}
    	\arrow["{\widetilde \U \sliceFunctor}", from=1-1, to=1-2]
    	\arrow["{\U \sliceWith{f_1} \times \U \sliceWith{f_2}}"', from=1-1, to=2-1]
    	\arrow["{\U \sliceWith{f_1 \otimes f_2}}", from=1-2, to=2-2]
    	\arrow["{\widetilde \U \sliceFunctor}"', from=2-1, to=2-2]
    \end{tikzcd}
    \qquad \qquad
    \begin{tikzcd}[cramped, row sep = scriptsize]
    	{\U \sliceWith{A_1} \times \U \sliceWith{A_2}} & {\U \sliceWith{(A_1 \otimes A_2)}} \\
    	{\U \sliceWith{A_2} \times \U \sliceWith{A_1}} & {\U \sliceWith{(A_2 \otimes A_1)}}
    	\arrow["{\widetilde \U \sliceFunctor}", from=1-1, to=1-2]
    	\arrow["{\sigma_\smcat}"', from=1-1, to=2-1]
    	\arrow["{\U \sliceWith{\sigma_\W}}", from=1-2, to=2-2]
    	\arrow["{\widetilde \U \sliceFunctor}"', from=2-1, to=2-2]
    \end{tikzcd}
    \]
    Consider objects $h_i \colon U_i \to A_i $ and $k_i \colon V_i \to A_i$ in $\U \sliceWith{A_i}$, along with arrows $\varphi_i \colon h_i \to k_i$. Naturality follows from functorialty of $\otimes$.
    \[
        \begin{tikzcd}[cramped, row sep = tiny, column sep = 0em]
        	{U_1 \otimes U_2} && {V_1 \otimes V_2} \\
        	& {B_1 \otimes B_2}
        	\arrow["{\varphi_1 \otimes \varphi_2}", from=1-1, to=1-3]
        	\arrow["{(h_1 \otimes h_2) \then (f_1 \otimes f_2)}"', from=1-1, to=2-2]
        	\arrow["{(k_1 \otimes k_2) \then (f_1 \otimes f_2)}", from=1-3, to=2-2]
        \end{tikzcd}
    \quad = \quad
    \begin{tikzcd}[cramped, row sep = tiny, column sep = 0em]
    	{U_1 \otimes U_2} && {V_1 \otimes V_2} \\
    	& {B_1 \otimes B_2}
    	\arrow["{\varphi_1 \otimes \varphi_2}", from=1-1, to=1-3]
    	\arrow["{(h_1 \then f_1) \otimes (h_2 \then f_2)}"', from=1-1, to=2-2]
    	\arrow["{(k_1 \then f_1) \otimes (k_2 \then f_2)}", from=1-3, to=2-2]
    \end{tikzcd}
    \]
    The associativity and unitality conditions follow since $\U$ is strict monoidal.
    The symmetry condition
    \[
    \begin{tikzcd}[cramped, row sep = tiny, column sep = 0em]
    	{U_1 \otimes U_2} && {V_1 \otimes V_2} \\
    	& {A_2 \otimes A_1}
    	\arrow["{\varphi_1 \otimes \varphi_2}", from=1-1, to=1-3]
    	\arrow["{(h_1 \otimes h_2) \then \sigma_{A_1,A_2}}"', from=1-1, to=2-2]
    	\arrow["{(k_1 \otimes k_2) \then \sigma_{A_1,A_2}}", from=1-3, to=2-2]
    \end{tikzcd}
    \quad = \quad
    \begin{tikzcd}[cramped, row sep = tiny, column sep = 0em]
    	{U_2 \otimes U_1} && {V_2 \otimes V_1} \\
    	& {A_2 \otimes A_1}
    	\arrow["{\varphi_2 \otimes \varphi_1}", from=1-1, to=1-3]
    	\arrow["{h_2 \otimes h_1}"', from=1-1, to=2-2]
    	\arrow["{k_2 \otimes k_1}", from=1-3, to=2-2]
    \end{tikzcd}
    \]
    holds up to natural 2-cells obtained by joining the two sides of the equation by $\sigma_{U_1,U_2} \colon U_1 \otimes U_2 \to U_2 \otimes U_1$ and $\sigma_{V_1,V_2} \colon V_1 \otimes V_2 \to V_2 \otimes V_1$ to form a commutative parallelogram.
\end{proof}

\begin{remark}
    If $\U$ is not strict, the associativity and unitality diagrams for $\U \sliceFunctor$ also only commute up to a natural invertible 2-cell. A weaker version of change-of-base still works in this setting (see \cite[Ch.~13]{garnerEnriched2015}), but composition is no longer strictly unital and associative, yielding a bicategory.
\end{remark}

\noindent  Although the partial slice is not symmetric on the nose, one can still transfer SMC structure along it, yielding a nearly strict symmetric monoidal 2-category (\cref{def:nearly-symmetric-monoidal-2cat}).

\begin{proposition} \label{prop:slice-monoidal-transfer}
    Let $(\C, \boxtimes, J)$ be a (symmetric) monoidal $\W$-category and $i \colon \U \hookrightarrow \W$ a full strict monoidal subcategory. Then $\U \sliceFunctor_* \C$ is a nearly strict (symmetric) monoidal 2-category with tensorators given by symmetries of $\W$, and coherence 1-cells given by those of $\C$. Unlike the associator and unitors, the symmetry is only natural up to 2-cells $\sigma^\W_{U_2,U_1}$. The inclusion 2-functor $\iota \colon \C_0 \to \U \sliceFunctor_* \C$ strictly preserves monoidal products and coherence 1-cells.
\end{proposition}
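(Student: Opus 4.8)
The plan is to reduce \cref{prop:slice-monoidal-transfer} to a combination of the change-of-base machinery established earlier: the partial slice functor $\U\sliceFunctor \colon \W \to \smcat$, which the previous proposition showed to be monoidal (symmetric up to the invertible $2$-cells $\sigma_{U_1,U_2}$), is exactly of the form required to invoke \cref{prop:change-of-base} and \cref{prop:change-of-base-monoidal-transfer}, except that one comparison datum (symmetry) is only weakly respected. So the first step is to apply the change-of-base $2$-functor $(\U\sliceFunctor)_* \colon \WCat \to \TTwocat$ to the monoidal $\W$-category $\C$. This produces a $\smcat$-enriched category $\U\sliceFunctor_*\C$ with the correct objects, hom-categories $\U\sliceWith{\C(X,Y)}$ (so $1$-cells are parametric maps $A \to \C(X,Y)$ with $A$ ranging over $\U$, and $2$-cells are reparametrizations), and composition/identities obtained by change-of-base along the comparison data $\widetilde{\U\sliceFunctor}$ and $\U\sliceFunctor_\epsilon$. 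I would then trace through \cref{prop:change-of-base-monoidal-transfer} to read off the monoidal product $\boxtimes^{\U\sliceFunctor}$, the unit $J$, and the coherence $1$-cells as the images $(\U\sliceFunctor)_*a$, $(\U\sliceFunctor)_*r$, $(\U\sliceFunctor)_*l$ (and $(\U\sliceFunctor)_*s$) of the coherence $\W$-natural isomorphisms of $\C$ — these are where ``coherence $1$-cells given by those of $\C$'' comes from.

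Next I would pinpoint exactly where strictness fails. Since $\C$ is genuinely a monoidal $\W$-category (associator/unitors are honest $\W$-natural isomorphisms) and $\U\sliceFunctor$ is genuinely monoidal in the associativity and unitality comparison data, the associator and unitors of $\U\sliceFunctor_*\C$ are honest $2$-natural transformations — so the pentagon and triangle hold on the nose. The one place $\U\sliceFunctor$ is only weakly symmetric monoidal is that $\widetilde{\U\sliceFunctor}$ does not strictly commute with $\sigma$; instead the hexagon-style square closes up to the invertible $2$-cell built from $\sigma^\W_{U_1,U_2}$. Feeding this through change-of-base, the symmetry $s_{A,B}$ of $\U\sliceFunctor_*\C$ becomes a $1$-cell natural only up to these $2$-cells $\sigma^\W_{U_2,U_1}$, and likewise interchange for $\boxtimes^{\U\sliceFunctor}$ holds only up to invertible $2$-cells — the tensorators $\vartheta$ of \cref{def:nearly-symmetric-monoidal-2cat} — which unwind to symmetries of $\W$ permuting the parameter objects: given parametric $1$-cells $f_i \colon A_i \to \C(X_i,Y_i)$ and $g_i \colon B_i \to \C(Y_i,Z_i)$, both bracketings produce parametric maps out of products of the $A_i$ and $B_i$ that differ precisely by the middle-four interchange symmetry in $\W$. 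I would verify these comparison $2$-cells are invertible (symmetries are iso) and satisfy the coherence conditions of \cref{def:nearly-symmetric-monoidal-2cat} by appealing to the coherence theorem for symmetric monoidal categories applied to $\W$, together with the SMC axioms for $\C$ transported via the strict monoidal inclusion.

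For the last sentence, I would observe that the inclusion $\iota \colon \C_0 \to \U\sliceFunctor_*\C$ is the composite of the identity-on-objects strict monoidal inclusion $\C_0 \hookrightarrow (\U\sliceFunctor_*\C)_0$ supplied by the ``underlying functor'' clause of \cref{prop:change-of-base-monoidal-transfer}, sending $f \colon I_\W \to \C(X,Y)$ to $(\U\sliceFunctor)_\epsilon \then (\U\sliceFunctor)f$ — concretely, to $f$ regarded as a parametric map with trivial parameter $\unitW \in \U$. Because $\U$ is a full \emph{strict} monoidal subcategory containing $\unitW$, and the comparison data $(\U\sliceFunctor)_\epsilon$, $\widetilde{\U\sliceFunctor}$ are strict on the unit and on pairs of unit-parametrized objects, the argument in the proof of \cref{prop:change-of-base-monoidal-transfer} (the diagram there, which ``does not require $\W$-functoriality of $\boxtimes^N$'') goes through verbatim with $N = \U\sliceFunctor$, giving that $\iota$ strictly preserves the monoidal product and strictly sends the coherence $1$-cells $a,r,l$ (and $s$) of $\C_0$ to those of $\U\sliceFunctor_*\C$. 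I expect the main obstacle to be purely bookkeeping: carefully writing the tensorators $\vartheta_{f_1,f_2,g_1,g_2}$ and the weak-symmetry $2$-cells as specific $\W$-symmetries and checking they assemble into a pseudofunctor $\boxtimes^{\U\sliceFunctor}$ and a strong $2$-natural transformation $s$ in the sense of \cite{johnson2Dimensional2020} — so I would state the identifications explicitly and defer the diagram chases to the appendix, citing coherence for $\W$ to avoid grinding through them.
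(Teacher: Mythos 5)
Your proposal follows essentially the same route as the paper: apply change-of-base along $\U\sliceFunctor$, trace the failure of strictness to the non-strict symmetry of $\widetilde{\U\sliceFunctor}$ (yielding the tensorator $m$ and the weakly natural symmetry), verify coherence by reducing to $\C_0$ and SMC coherence in $\W$, and obtain the strict inclusion from the unit-parametrized maps. The identifications you state explicitly (tensorators as middle-four interchange symmetries of parameter spaces, associator/unitors remaining strict because $\U$ is strict monoidal) are exactly those worked out in the paper's appendix, so the argument is correct.
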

\begin{proof}[Proof (see \cref{app:transfer-monoidal-slice} for details)]
    Denote $\U \sliceFunctor$ by $N$. The definition of $\boxtimes^N$ from \cref{prop:change-of-base-monoidal-transfer} still makes sense. One checks that $(\widetilde N)_*$ defines a pseudofunctor, with tensorator 2-natural isomorphism 
    \[\vartheta_{f_1,f_2,g_1,g_2} \colon (f_1 \boxtimes^N f_2) \then (g_1 \boxtimes g_2) \Rightarrow (f_1 \then g_1) \boxtimes^N (f_2 \then g_2)\]
    given by the symmetry $m$ used in \cref{def:monoidal-prod-Vcats}. 
    The coherence maps $a,r$ and $l$ lift to strict 2-natural transformations by the same argument used in \cref{prop:change-of-base-monoidal-transfer}. One checks that the twisted symmetry defines a strong 2-natural transformation. 
    By definition, the composition and monoidal product in $N_*\C$ coincide with those of $\C_0$. Hence, the lifted coherence maps satisfy the axioms they do in $\C_0$, showing that $N_*\C$ is a nearly strict symmetric monoidal 2-category.
    It also follows that $\iota$ strictly preserves the monoidal product and coherence 1-cells. 
\end{proof}

\subsection{Main construction} \label{sec:main-construction}
Our main result combines uncertainty monads with parametrization derived from the partial-slice functor. We include an adapter functor $F \colon \V \to \W$, in case the monad does not operate directly on $\V$.

\begin{theorem} \label{thm:main-construction}
    Let $(\monadM,\monadMul,\monadUnit,\symMonadMor)$ be a symmetric monoidal monad on $\W$ and let $F \colon \V \to \W$ be a symmetric monoidal functor. Given a full strict monoidal subcategory $i \colon \U \hookrightarrow \W$, we write
    \[
        \ParaU{\U}{\monadM}{F} \colon \V \overset{F}{\longrightarrow} \W \overset{\leftFunctor_\monadM}{\longrightarrow} \Kl_\monadM \overset{\U \sliceFunctor}{\longrightarrow} \smcat.
    \]
    For any $\V$-category $\C$, we obtain of a 2-category $\ParaU{\U}{\monadM}{F}_*\C$ that adds \emph{parametric $\monadM$-uncertainty} to $\C$ with parameter spaces in $\U$. If $\C$ is (symmetric) monoidal, then $\ParaU{\U}{\monadM}{F}_*\C$ is a nearly strict (symmetric) monoidal 2-category.
    Unpacking the change-of-base yields the following explicit definition:
    \begin{compactitem}
        \item[(i)] The 0-cells are $\C$-objects.
        \item[(ii)] The 1-cells between $X$ and $Y$ are $\Kl_\monadM$-arrows $f \colon U_1 \to F\C(X,Y)$, where $U_1$ is an object in $\U$,
        \item[(iii)] The 2-cells between $f \colon U_1 \to F\C(X,Y)$ and $g \colon V_1 \to F\C(X,Y)$ are $\Kl_\monadM$-arrows $\varphi \colon U_1 \to V_1$ satisfying $f = \varphi \then g$. They compose vertically by $\Kl_\monadM$-composition.
        \item[(iv)] 1- and 2-cells compose horizontally according to
        \begin{equation} \label{eq:horizontal-comp-param}
            \begin{tikzcd}[cramped, sep = scriptsize, column sep = tiny]
            	{U_1} & {V_1} \\
            	& {F\C(X,Y)}
            	\arrow["{\varphi_1}", from=1-1, to=1-2]
            	\arrow["{f_1}"', from=1-1, to=2-2]
            	\arrow["{g_1}", from=1-2, to=2-2]
            \end{tikzcd}
            \:,\:
            \begin{tikzcd}[cramped, sep = scriptsize, column sep = tiny]
            	{U_2} & {V_2} \\
            	& {F\C(Y,Z)}
            	\arrow["{\varphi_2}", from=1-1, to=1-2]
            	\arrow["{f_2}"', from=1-1, to=2-2]
            	\arrow["{g_2}", from=1-2, to=2-2]
            \end{tikzcd}
            \quad \mapsto \quad
            \begin{tikzcd}[cramped, sep = scriptsize]
            	{U_1 \otimes U_2} & {V_1 \otimes V_2} \\
            	& {F\C(X,Y) \otimes F\C(Y,Z)} & {F\C(X, Z)}
            	\arrow["{\varphi_1 \otimes \varphi_2}", from=1-1, to=1-2]
            	\arrow["{f_1 \otimes f_2}"', from=1-1, to=2-2]
            	\arrow["{g_1 \otimes g_2}", from=1-2, to=2-2]
            	\arrow["{(*)}", from=2-2, to=2-3]
            \end{tikzcd}
        \end{equation}
        where $(*) = \widetilde F \then F(\then_\C) \then \monadUnit$ denotes the lifted composition of $\C$.
        The identities are given by 1-cells $\id_X := F_\epsilon \then F(\id_X) \then \monadUnit \colon I_\W \to F\C(X,X)$.
        \item[(v)] The monoidal product of 1- and 2-cells is given by a diagram similar to \eqref{eq:horizontal-comp-param}, with $(*) = \widetilde F \then F(\boxtimes_\C) \then \monadUnit$ being the lifted monoidal product of $\C$. 
        It has $I_\C$ as unit. 
        Interchange only hold up to natural invertible 2-cells $\vartheta_{f_1,f_2,g_1,g_2} \colon (f_1 \boxtimes f_2) \then (g_1 \boxtimes g_2) \Rightarrow (f_1 \then g_1) \boxtimes (f_2 \then g_2)$ called tensorators, given by the map $m$ from \cref{def:monoidal-prod-Vcats} that rearranges the parameter spaces. 
        The coherence maps are lifted from those in $\C$. For example, the left unitor has components $l_A = F_\epsilon \then F(l^\C_{A}) \then \monadUnit \colon I_\W \to F\C(I_\C \boxtimes X, X)$.
        The symmetry is only natural up to a 2-cell that swaps the parameter spaces.
    \end{compactitem}
\end{theorem}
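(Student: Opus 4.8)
The plan is to recognise $\ParaU{\U}{\monadM}{F}$ as a composite of (lax) monoidal functors and then let the change-of-base results of \cref{sec:background} do the work, after which the explicit description (i)--(v) is obtained by unwinding definitions through the composite. First I would check that $\ParaU{\U}{\monadM}{F} = F \then \leftFunctor_\monadM \then (\U \sliceFunctor)$ is a (lax) monoidal functor: $F$ is symmetric monoidal by hypothesis; $\leftFunctor_\monadM$ is strict symmetric monoidal by \cref{prop:monad-markov-category}; and $\U \sliceFunctor$ is monoidal by the monoidality of the partial slice established in \cref{subsec:slice-functor}, where $\U$ is full strict monoidal --- to match the codomain $\Kl_\monadM$ of $\leftFunctor_\monadM$ one uses that $\leftFunctor_\monadM$ is identity-on-objects and strict monoidal, so the objects of $\U$ span a full strict monoidal subcategory of $\Kl_\monadM$ and the partial slice is taken there. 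Composites of monoidal functors are monoidal, so \cref{prop:change-of-base} yields a $2$-functor out of $\VCat$ valued in $\smcat$-enriched categories, i.e.\ strict $2$-categories; applied to $\C$ this produces $\ParaU{\U}{\monadM}{F}_*\C$. Items (i)--(iv) then follow by composing three descriptions: the comma category $(\U \sliceFunctor)(A)$ with its post-composition action (\cref{def:slice-functor}); the Kleisli composition of $\Kl_\monadM$ together with $\leftFunctor_\monadM(h) = h \then \monadUnit$ (\cref{prop:monad-markov-category}); and the change-of-base formulas for composition and identity (\cref{prop:change-of-base}). In particular a $1$-cell $X \to Y$ becomes a $\Kl_\monadM$-arrow $U_1 \to F\C(X,Y)$ with $U_1$ an object of $\U$, a $2$-cell a $\Kl_\monadM$-reparametrization $\varphi$ with $f = \varphi \then g$, horizontal composition glues parameter spaces by $\otimes$ and post-composes with $\leftFunctor_\monadM$ of the lifted composition of $\C$, namely $(*) = \widetilde F \then F(\then_\C) \then \monadUnit$, and the identity is $F_\epsilon \then F(\id_X) \then \monadUnit$.

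For the (nearly strict, symmetric) monoidal claim, assume $\C$ is a (symmetric) monoidal $\V$-category and set $G := F \then \leftFunctor_\monadM \colon \V \to \Kl_\monadM$, which is symmetric monoidal as a composite of symmetric monoidal functors. By \cref{prop:change-of-base-monoidal-transfer}, $G_*\C$ is a genuine (symmetric) monoidal $\Kl_\monadM$-category with structure functor $\boxtimes^G$, unit $J$, and coherence isomorphisms $G_*a$, $G_*r$, $G_*l$ (and $G_*s$). Assuming $\Kl_\monadM$ strict (without loss of generality by strictification of Markov categories), I would then apply \cref{prop:slice-monoidal-transfer} to $G_*\C$ along the full strict monoidal subcategory of $\Kl_\monadM$ on the objects of $\U$, concluding that $(\U \sliceFunctor)_*(G_*\C)$ is a nearly strict (symmetric) monoidal $2$-category (\cref{def:nearly-symmetric-monoidal-2cat}) whose tensorators are the parameter-rearranging symmetries $m$ of \cref{def:monoidal-prod-Vcats}, whose coherence $1$-cells are those of $G_*\C$, and whose symmetry is $2$-natural only up to the swap $2$-cells $\sigma^{\Kl_\monadM}_{U_2,U_1}$. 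Finally, $2$-functoriality of change-of-base (\cref{prop:change-of-base-2-functorial}) gives $(\U \sliceFunctor)_* \circ G_* = (G \then \U \sliceFunctor)_* = \ParaU{\U}{\monadM}{F}_*$, so $\ParaU{\U}{\monadM}{F}_*\C = (\U \sliceFunctor)_*(G_*\C)$ carries exactly this structure; unwinding it yields (v), where the monoidal product of $1$- and $2$-cells glues parameters by $\otimes$ and post-composes with $\widetilde F \then F(\boxtimes_\C) \then \monadUnit$, and a coherence $1$-cell such as the left unitor is $l_A = F_\epsilon \then F(l^\C_A) \then \monadUnit$, the Kleisli image of $G_*$ applied to $l^\C$. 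The accompanying strict (symmetric) monoidal inclusion $\C_0 \to \ParaU{\U}{\monadM}{F}_*\C$ is the composite of the inclusion from \cref{thm:monad-uncertainty} with the partial-slice inclusion from \cref{prop:slice-monoidal-transfer}.

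\textbf{Main obstacle.} The conceptual argument is short once the cited propositions are in hand; the genuine effort is the bookkeeping that identifies the abstract change-of-base output with the hands-on formulas (i)--(v). Concretely, the hard part is tracking the comparison data $\widetilde F$, $\symMonadMor$, $\monadUnit$ and the partial-slice comparison functors through the three-layer composite and verifying that the only failures of strictness are exactly the reparametrization $2$-cell $m$ witnessing interchange and the swap $2$-cell witnessing naturality of the symmetry --- and that these are coherent, so that $\ParaU{\U}{\monadM}{F}_*\C$ genuinely lands among the (symmetric) monoidal $2$-categories of \cref{def:nearly-symmetric-monoidal-2cat}.
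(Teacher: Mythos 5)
Your proposal is correct and follows essentially the same route as the paper: apply \cref{prop:change-of-base,prop:change-of-base-monoidal-transfer} to the composite symmetric monoidal functor $F \then \leftFunctor_\monadM$, then apply \cref{prop:slice-monoidal-transfer} to the resulting (symmetric) monoidal $\Kl_\monadM$-category with $\Kl_\monadM$ strictified, and unwind the definitions to obtain (i)--(v). Your additional remarks --- that the partial slice is taken over the full strict monoidal subcategory of $\Kl_\monadM$ spanned by the objects of $\U$, and that $(\U\sliceFunctor)_* \circ (F \then \leftFunctor_\monadM)_* = \ParaU{\U}{\monadM}{F}_*$ by 2-functoriality of $(-)_*$ --- are exactly the bookkeeping the paper leaves implicit.
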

\begin{proof}
    Use \cref{prop:change-of-base,prop:change-of-base-monoidal-transfer} on the symmetric monoidal functor $F \then \leftFunctor_\monadM$. Then apply \cref{prop:slice-monoidal-transfer} to the (symmetric) monoidal $\Kl_\monadM$-category $(F \then \leftFunctor_\monadM)_*\C$ while assuming that $\Kl_\monadM$ is strict.
\end{proof}
\noindent Next, we show that $\ParaU{\U}{\monadM}{F}_*\C$ extends $\C$, allowing us to transfer additional structure.

\begin{proposition}\label{prop:change-of-base-inclusion}
    There is an identity-on-objects 2-functor $\iota \colon \C_0 \to \ParaU{\U}{\monadM}{F}_*\C$ that strictly preserves monoidal products and coherence maps. If $F_*^0$ and post-composition by $\monadUnit$ are faithful, then so is $\iota$.
\end{proposition}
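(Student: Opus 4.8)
The plan is to realize $\iota$ as the composite of the two inclusion functors already produced by \cref{prop:change-of-base-monoidal-transfer,prop:slice-monoidal-transfer}, following the same two-step factorization of $\ParaU{\U}{\monadM}{F}$ used in the proof of \cref{thm:main-construction}. So I expect the proposition to fall out as a corollary of the change-of-base machinery, with the only work being to track what happens to the underlying categories.

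First I would observe that $F \then \leftFunctor_\monadM \colon \V \to \Kl_\monadM$ is symmetric monoidal, being the composite of the given symmetric monoidal $F$ with the strict symmetric monoidal $\leftFunctor_\monadM$ of \cref{prop:monad-markov-category}. Applying \cref{prop:change-of-base-monoidal-transfer} to it yields a (symmetric) monoidal $\Kl_\monadM$-category $\D := (F \then \leftFunctor_\monadM)_*\C$ together with a strict (symmetric) monoidal, identity-on-objects functor $\iota_1 := (F \then \leftFunctor_\monadM)_*^0 \colon \C_0 \to \D_0$; unwinding the definitions (using that $\leftFunctor_\monadM$ is strict, naturality of $\monadUnit$, and the monad unit law) it sends $f \colon I_\V \to \C(X,Y)$ to $F_\epsilon \then Ff \then \monadUnit$. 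Next, taking $\U \hookrightarrow \Kl_\monadM$ as the relevant full strict monoidal subcategory, I would apply \cref{prop:slice-monoidal-transfer} to $\D$. Since change-of-base is 2-functorial (\cref{prop:change-of-base-2-functorial}), the resulting 2-category $\U\sliceFunctor_*\D$ equals $(F \then \leftFunctor_\monadM \then \U\sliceFunctor)_*\C = \ParaU{\U}{\monadM}{F}_*\C$, and the proposition supplies an identity-on-objects inclusion 2-functor $\iota_2 \colon \D_0 \to \ParaU{\U}{\monadM}{F}_*\C$ strictly preserving monoidal products and coherence 1-cells, which sends a $\Kl_\monadM$-arrow $g \colon I_\W \to F\C(X,Y)$ to the 1-cell $(I_\W, g)$ with trivial parameter space.

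Viewing $\C_0$ as a locally discrete 2-category, I then set $\iota := \iota_1 \then \iota_2$. It is identity-on-objects because $\iota_1$ and $\iota_2$ both are, and it strictly preserves monoidal products and coherence maps because each factor does and both properties are stable under composition: the coherence maps of $\C_0$ are carried onto those of $\D_0$ and thence onto those of $\ParaU{\U}{\monadM}{F}_*\C$. Concretely $\iota$ sends $f \colon I_\V \to \C(X,Y)$ to the 1-cell $(I_\W, F_\epsilon \then Ff \then \monadUnit)$, in agreement with the identity 1-cells described in \cref{thm:main-construction}.

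For the faithfulness clause, note that since $\leftFunctor_\monadM$ is strict, $\iota_1$ factors as $F_*^0 \colon \C_0 \to (F_*\C)_0$ followed by the functor ``post-compose with $\monadUnit$'' (sending $g \mapsto g \then \monadUnit$), so $\iota_1$ is faithful as soon as both of these are --- which is exactly the hypothesis. The 2-functor $\iota_2$ is injective on 1-cells, since $(I_\W, g) = (I_\W, g')$ forces $g = g'$, and it is trivially faithful on 2-cells because $\C_0$ carries only identities; hence $\iota_2$ is faithful unconditionally, and $\iota$, being a composite of faithful functors, is faithful. I do not anticipate a genuine obstacle here; the one thing requiring care is bookkeeping --- confirming that $\U\sliceFunctor_*\D$ really is the iterated change-of-base $\ParaU{\U}{\monadM}{F}_*\C$ so that the two inclusions compose, and checking that ``strictly preserves coherence maps'' survives composition --- both of which are routine.
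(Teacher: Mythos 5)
Your proposal matches the paper's own proof: both factor $\iota$ as the underlying strict monoidal functor $(\monadM F)_*^0 \colon \C_0 \to ((\monadM F)_*\C)_0$ from \cref{prop:change-of-base-monoidal-transfer} followed by the inclusion from \cref{prop:slice-monoidal-transfer}, and both deduce faithfulness by observing that the composite sends $f$ to $F_\epsilon \then Ff \then \monadUnit$. Your write-up merely spells out the bookkeeping (injectivity of the slice inclusion on 1-cells, locally discrete $\C_0$) in a bit more detail than the paper does.
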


\begin{proof}
   By \cref{prop:change-of-base-monoidal-transfer}, change-of-base induces a strict symmetric monoidal functor $(\monadM F)_*^0 : \C_0 \to ((\monadM F)_*\C)_0$. 
   \Cref{prop:slice-monoidal-transfer} yields a 2-functor $((\monadM F)_*\C)_0 \to \ParaU{\U}{\monadM}{F}$ that strictly preserves monoidal products.
   The composite maps $f \colon I_\V \to \C(X,Y)$ to $\delta(f) \colon I_\W \overset{F_\epsilon}{\longrightarrow} FI_\V \overset{Ff}{\longrightarrow} F\C(X,Y) \overset{\monadUnit}{\longrightarrow} \monadM F \C(X,Y)$, proving the claim about faithfulness. Moreover, the coherence maps are precisely of this form.
\end{proof}

\begin{remark}
    By \cref{prop:change-of-base-inclusion}, we can transfer any structure on $\C_0$ that is defined in terms of arrows satisfying equations involving composition, monoidal products, and coherence maps to $\ParaU{\U}{\monadM}{F}_*\C$. This includes compact closed structure, (commutative) (co)monoids, and the supplies defined in \cite{fongSupplying2020}, which transfer along essentially surjective, strict symmetric monoidal functors \cite[Prop.~3.24]{fongSupplying2020}.
\end{remark}

\noindent We now show how to use \Cref{thm:main-construction} to obtain useful 2-categories.

\begin{example}[External parameterization]\label{ex:external-parametrization}
    Given a symmetric monoidal $F \colon \V \to \W$ and full monoidal subcategory $\U \hookrightarrow \W$, we can parametrize a $\V$-category $\C$ with $\U$-arrows $A \to \C(X,Y)$ using $\ParaU{\U}{}{F}_* \C$, where we have suppressed the identity monad in the notation. We will similarly omit $F$ when $F = \Id$.
\end{example}

\begin{remark}
    A common way to introduce parametric maps is the $\Para$ construction \cite{fongBackpropFunctorCompositional2019, capucciTowards2022}. In the simplest formulation, 1-cells in $\Para(\C, \boxtimes, J)$ are maps $A \boxtimes X \to Y$ in $\C$, where $A$ is thought of as a parameter space. Hence, $\Para$ produces parametric maps internal to $\C$. When $\C$ is closed, these are equivalent to maps $A \to [X,Y]$, where $[X,Y]$ denotes the internal hom. 
    For our applications, the external representation is more useful.
    In particular, it decouples the parametrization issue from properties of $\C$. 
\end{remark}

\begin{example}[Possibilistic uncertainty]
    If $\C$ is an ordinary category, $\ParaU{\U}{\PowNoEmpty}{}_* \C$ yields parametrized subsets of arrows. If $\C$ is $\Pos$-category $\C$, $\ParaU{\U}{\Arr}{}_* \C$ contains parametrized intervals of arrows. 
\end{example}

\begin{example}[Probabilistic uncertainty]
    To use the distribution monad to endow a category with probabilistic uncertainty, we need to view it as $\Meas$-enriched. Crucially, our assignments must retain measurability of the composition and monoidal product maps. If $\C$ is an ordinary category, one can assign each set $\C(X,Y)$ the discrete $\sigma$-algebra $\powerset(\C(X,Y))$. This works well for finite and countable sets. For uncountable sets like $\Reals$, however, the discrete $\sigma$-algebras is often too large to be useful. How to proceed will depend on the application. Sometimes, it may be possible to directly construct a $\Meas$-enriched version of $\C$ with desirable $\sigma$-algebras by hand. In other cases, it might be possible to do so uniformly by constructing a symmetric monoidal functor $F: \V \to \Meas$. 
\end{example}

\begin{example} \label{ex:meas-dp}
    For $\DP$, it seems difficult to equip the posets $\DP(\posetP,\posetQ)$ with non-trivial $\sigma$-algebras in a way that makes composition measurable. For example, the natural choice of assigning $\DP(\posetP,\posetQ)$ the $\sigma$-algebra generated by its upper sets fails due to the presence of posets like $\Reals^2$ with uncountable width.

    \noindent In practice, we can bypass measurability issues by assigning $\DP(\posetP,\posetQ)$ the discrete $\sigma$-algebra. To define distributions on the resulting space, we can push forward distributions from a more familiar space $A$ along a measurable map $f \colon A \to \DP(\posetP,\posetQ)$. Such maps may be obtained from an arbitrary function $g \colon A \to \DP(\posetP,\posetQ)$ by the following discretization procedure: Fix a partition $P := \{p_j\}_{j=1}^N$ of $A$ into finitely many measurable sets. We can approximate $g$ by $g_P \colon P \to \DP(\posetP,\posetQ)$, where $g_P(p_j) := g(x_j)$ for some $x_j \in p_j$. This is measurable for the discrete $\sigma$-algebra on $P$. Moreover, there is a measurable map $i \colon A \to P$ that assigns each point to the set containing it. Hence, the composition $f := i \then g_P \colon A \to \DP(\posetP,\posetQ)$ is a measurable approximation of $g$ whose fidelity increases as $P$ gets finer. Arguably, such discretization is inevitable once we compute on finite hardware, so we do not lose much by restricting to this case. Such $f$ can also be used to define Markov kernels $B \to \DP(\posetP,\posetQ)$ by composing with kernels $k \colon B \to A$.
\end{example}

\begin{example}[Learning reaction networks]
    Decorated and structured cospans \cite{fongAlgebra2016, baezStructured2020} provide a systematic method for constructing hypergraph categories \cite{fongHypergraph2019} of open systems. The framework has been widely applied, for example to open chemical reaction networks \cite{baezCompositional2017}. These can be thought of as structured cospans $\Csp(\Petri)$ in a category $\Petri$ of Petri nets \cite[Section 6]{baezStructured2020}. For finite Petri nets, we can view the resulting hypergraph category as being $\FinSet$-enriched. We construct the symmetric monoidal 2-category $\ParaU{\U}{\Dist}{\Sigma}_* \Csp(\Petri)$, where $\Sigma \colon \FinSet \to \Meas$ sends each set to the discrete measurable space. The 1-cells of this category are Markov kernels $A \to \Csp(\Petri)(X,Y)$ that send each $a \in A$ to a discrete distribution of open reaction networks with interfaces $X,Y$. The hypergraph structure on $\Csp(\Petri)$ transfers along the inclusion. Hence $\ParaU{\U}{\Dist}{\Sigma}_* \Csp(\Petri)$ can represent hypotheses about open networks which can be updated through Bayesian inference on experimental data. Moreover, it incorporates the compositional structure of the network, enabling causal reasoning about experimental interventions.
\end{example}

\section{Application to co-design}\label{sec:learning-dps}
In this section, we demonstrate the utility of parametric uncertainty by applying it to the category $\DP$. 
A computational case study based on these ideas is presented in \cite{huangComposable2025}.
We adopt the co-design diagrams from \cite{zardiniCoDesignComplexSystems2023}, using the system in \cref{fig:dp-chassis-battery} as our running example.
\subsection{Representing parameterized and uncertain design problems}
As explained in \cref{ex:DP-pos-enriched} and \cref{ex:DP-pos-enriched-smc}, we can view the compact closed SMC $\DP$ as being $\Pos$-enriched, in which case we denote it $\PosDP$. By applying the change-of-base construction, we obtain parametrized design problems with chosen uncertainty semantics. Since change-of-base is functorial, traditional co-design concepts can be lifted to the new setting.

\begin{example}\label{ex:dp-param-det}
    The 2-category $\ParaU{\U}{\Id}{}_* \DP$ represents \emph{design problems parameterized by functions}, whose 1-cells between $\posetP$ and $\posetQ$ are functions $A \to \DP(\posetP, \posetQ)$. Similarly, $\ParaU{\U}{\Id}{}_* \PosDP$ models \emph{monotonically parameterized design problems}, where the parameter spaces are ordered and the functions monotone.
\end{example}

\begin{example}
    The 2-category $\ParaU{\U}{}{\PowNoEmpty}_* \DP$ models \emph{robust design problems} parametrized by functions.
    Its 1-cells between $\posetP$ and $\posetQ$ are functions $A \to \PowNoEmpty \DP(\posetP, \posetQ)$, that assign each parameter $a \in A$ a nonempty subset of design problems from $\posetP$ to $\posetQ$. The analogous construction $\ParaU{\U}{}{\Arr}_* \PosDP$ yields parametrized \emph{intervals} of design problems.
\end{example}

\begin{example}
    Using the distribution monad $\Dist$ and the functor $\Sigma \colon \Set \to \Meas$ that assigns each set the discrete measurable space, we obtain a 2-category $\ParaU{\U}{\Dist}{\Sigma}_* \DP$ of \emph{parametrized distributions} of design problems. Its  1-cells Markov kernels $A \to \DP(\posetP, \posetQ)$, where the $\DP(\posetP, \posetQ)$ carries the discrete $\sigma$-algebra. Such kernels can be constructed by the procedure described in \cref{ex:meas-dp}.
\end{example}
\noindent We now demonstrate the use of these categories in a concrete example.

\begin{example}[Electric vehicle]\label{ex:ev-chassis-battery}
The design problem in \cref{fig:dp-chassis-battery} models an electric vehicle with of two components: chassis and battery.
    \begin{figure}[tb]
        \centering
        \includegraphics[width=0.65\linewidth]{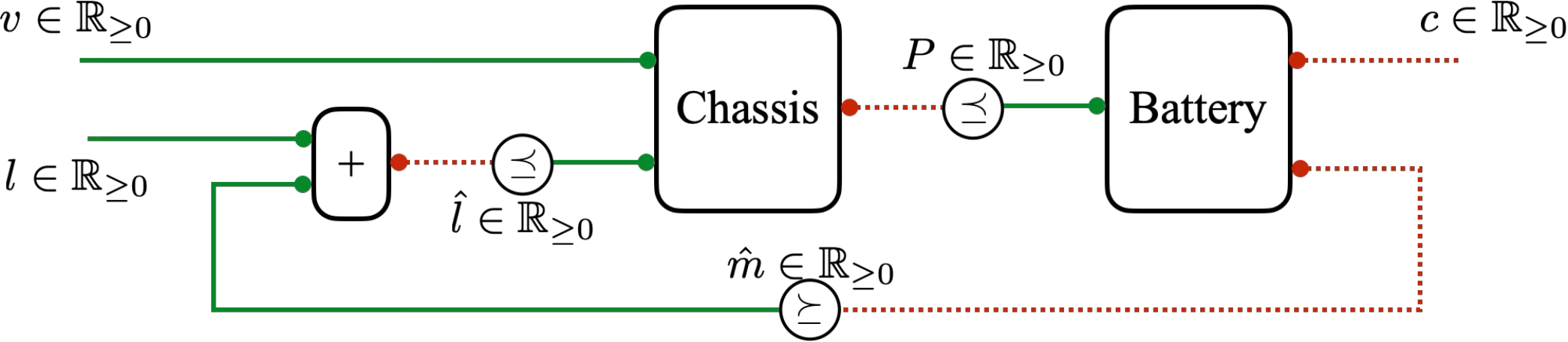}
        \caption{Design problem for an electric vehicle composed of chassis and battery components.}
        \label{fig:dp-chassis-battery}
    \end{figure}
    Blocks represent arrows in $\DP$, while wires are objects denoting resources (red) and functionalities (green).
    The battery $\BatteryDP$ provides \funsty{power} $P$ in return for \ressty{cost} $c$ and \ressty{battery mass} $\hat m$. 
    The chassis $\ChassisDP$ provides \funsty{velocity} $v$ and \funsty{total load} $\hat l$, while requiring \ressty{power} $P$.
    The functionality $l$ is the extra payload provided by the design.
    We adopt the usual order $\leq$ for $\Reals$, and write $\Reals^n$ for the $n$-fold product of posets.
    By the graphical language for \emph{compact closed SMCs}, the diagram specifies an arrow in $\DP$ representing the composite design problem.
    \noindent In practice, the design problems for the chassis and battery may be \emph{parameterized}.
    For instance, the chassis $\ChassisDP$ might be given by:
    \begin{equation*}
        \ChassisDP((v,\hat l),P;\paramTheta) = \True \iff \monfunPhi(v, \hat l;\paramTheta) \leq P,
    \end{equation*}
    where $\monfunPhi$ is a monotone function in $v$ and $\hat l$, with parameter $\paramTheta$. 
    Such dependencies can be captured in $\ParaU{\U}{\Id}{}_* \DP$. Here, $\paramTheta$ can model factors we have no control over, but need to be taken into account. Alternatively, we might introduce $\paramTheta$ to assess how \emph{sensitive} the solution of the design problem is to problem specification. Finally, we could use $\paramTheta$ to fit the design problem to empirical data.

    \noindent Uncertainty in component performance can be modeled by leveraging uncertainty monads.
    For example, suppose we are choosing from battery models $B_M$, where each model provides upper and lower bounds on performance.
    This can be formalized as a 1-cell $\BatteryDP \colon B_M \to \Arr \DP(\Reals, \Reals^2)$ in $\ParaU{\U}{\Arr}{}_*\PosDP$. 
    Similarly, we may have bounds $\ChassisDP \colon C_M \to  \Arr \DP(\Reals^2, \Reals)$ on the performance of chassis in $C_M$. The composite in $\ParaU{\U}{\Arr}{}_*\PosDP$ combines these bounds by composing the worst and best case bounds for each pair in $B_M \times C_M$. 
    If we have more detailed knowledge about component performance, we may instead use distributions to represent it. Distributions are also useful for capturing random influences on performance.
\end{example}
\noindent We can extend the graphical language for design problems to the parametrized case, as is often done for the $\Para$ construction. 
\Cref{fig:chassis-battery-parameterized} extends the electric vehicle. The 1-cells are shown as rounded boxes with incoming wires representing the parametric dependence. The sharp rectangles depict reparametrization 2-cells given by arrows in $\Kl_\monadM$. Since the 1-cells inhabit a nearly strict symmetric monoidal 2-category, the diagram has an unambiguous interpretation up to unique permutations of the parameter spaces.

\begin{figure}
    \centering
    \includegraphics[width=0.65\linewidth]{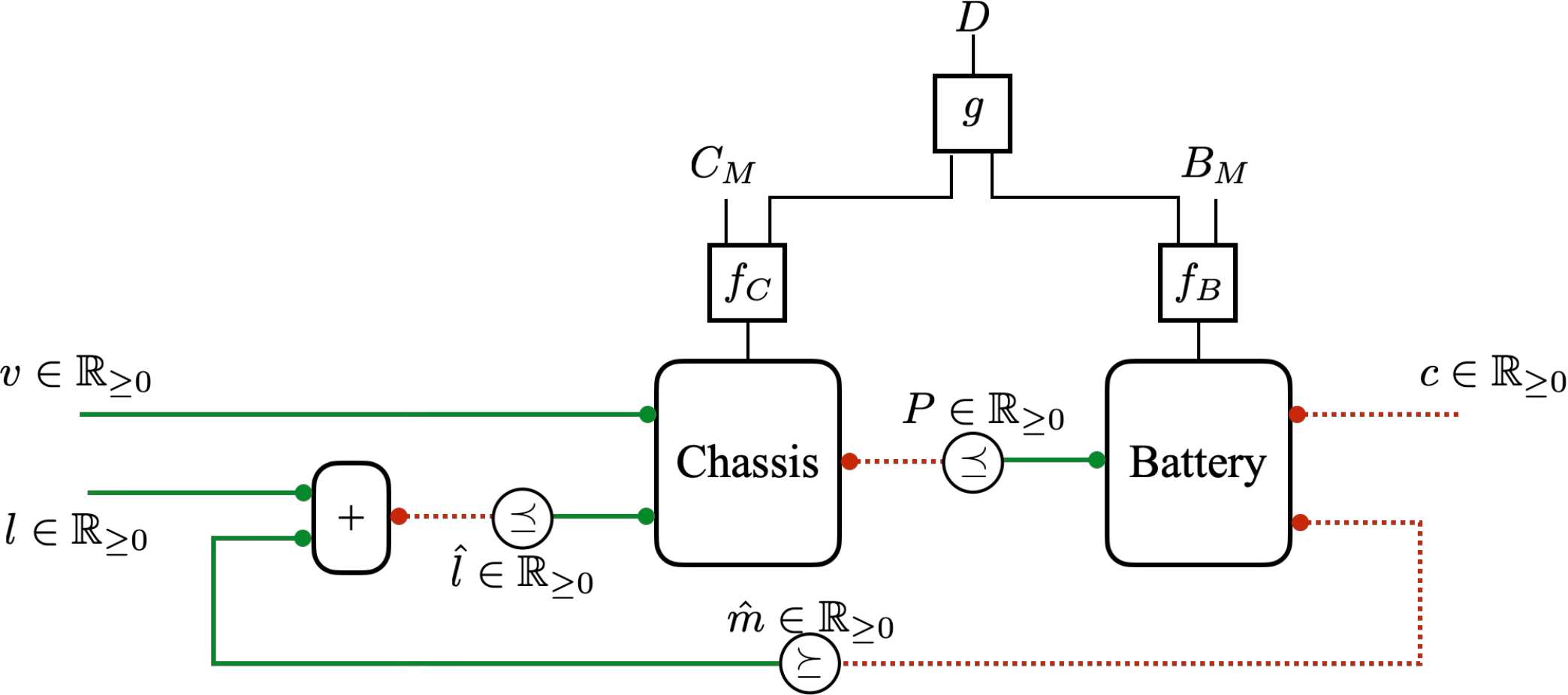}
    \caption{Parameterized version of the electric vehicle design problem. The rounded rectangles represent 1-cells $A \to \monadM F \DP(\posetP, \posetQ)$, while the rectangles represent reparameterization 2-cells given by arrows in $\Kl_\monadM$.}
    \label{fig:chassis-battery-parameterized}
\end{figure}

\begin{continueexample}{ex:ev-chassis-battery} 
    \Cref{fig:chassis-battery-parameterized} shows a reparameterization of the electric vehicle problem by a 2-cell composed of $f_C, f_B$, and $g$. This introduces dependencies between the components. For probabilistic uncertainty, this allows us to graphically model complex random processes that influence the design.
\end{continueexample}

\subsection{Making decisions}

In co-design, we \emph{query} problems to inform decisions.
For instance, the query \FixFunMinRes{} maps a design problem $\designProblem \in \DP(\posetF, \posetR)$ and a functionality $f \in \posetF$ to the set of minimal resources which make $f$ feasible.
Parametrized design problems unlock additional queries, where we aim to find decision parameters that maximize some utility function on designs.

\begin{continueexample}{ex:ev-chassis-battery}\label{ex:chassis-battery-decision}
    Viewing \cref{fig:chassis-battery-parameterized} as a diagram of deterministically parameterized design problems yields a map $f \colon C_M \otimes D \otimes B_M \to \DP(\PosReals^2, \PosReals)$. We think of the domain as a space of decision variables. Suppose we require our design to yield a fixed velocity and load $(v,l)$. For each possible decision, the \FixFunMinRes{} query $q$ returns the minimal cost $c$ required to satisfy our design constraints. Hence, the optimal decision is one minimizing $f \then q \colon C_M \otimes D \otimes B_M \to \PosReals$.
    In case there is probabilistic uncertainty about components, we can lift $q$ to $\Kl_\Dist$ and compose $f \then q$ as Markov kernels,  leveraging Bayesian decision theory. 
    For instance, we might minimize the expected cost, or use some risk-sensitive metric.
    The analogous method works for other types of uncertainty. For instance, in case of performance bounds, we might optimize the worst case scenario.
\end{continueexample}

\subsection{Learning design problems}

In addition to making decisions based on known models, we often need to learn models from data. We illustrate how our construction can handle such cases by example.

\begin{example}[Optimization]
    Suppose the chassis is of the form $\monfunPhi(v,l;\paramTheta) \leq P$ for $\monfunPhi$ monotone in $(v,l)$, and that we have a dataset of observed feasible triples $y_i := (v_i,l_i,P_i)$. 
    If $y_i$ is noisy empirical data, a simple least-squares fit for $\paramTheta$ might be appropriate. However, the resulting $\hat \paramTheta$ will likely violate some of the $y_i$. Hence, if $y_i$ are guaranteed, for instance if they are the results of exact simulations, then we could choose $\hat \paramTheta \in \{\paramTheta :  \monfunPhi(v_i,l_i;\paramTheta) \leq P_i \}$ according to a metric that balances fit and complexity.
\end{example}

\begin{example}[Bayesian inference]
    Viewing \cref{fig:chassis-battery-parameterized} using probabilistic semantics returns a Markov kernel $k: C_M \otimes D \otimes B_M \to \DP(\PosReals^2, \PosReals)$. Here, $C_M$ and $B_M$ might represent design decisions, while $D$ is an unknown manufacturer-specific parameter affecting performance. Moreover, suppose we have observed a dataset of feasible triples $y_i := (v_i,l_i,c_i)$, each associated with input decisions $x_i := (c_{M,i},b_{M,i})$. Given a prior distribution $p(d) \colon I_\Meas \to D$, we can condition on the data to obtain a posterior distribution $\hat p(d \mid x_{1:n},y_{1:n}) \colon  I_\Meas \to D$. This process can be expressed diagrammatically in $\Stoch$ \cite{choDisintegration2019, fritzSynthetic2020}.
\end{example}

\begin{example}[Active learning]
     In practice, components are often implicitly defined through some expensive procedure, such as simulations or optimization problems.
     To approximate these efficiently, \emph{active learning} leverages a surrogate model.
     Starting from an initial surrogate, we select simulation settings, update the surrogate with the results, and repeat. 
     The surrogate helps identify settings likely to provide informative results.
     Any uncertainty semantics can be used, with more expressive options offering better control but higher computational cost.
     Our approach allows composing surrogates for individual components to predict information and update beliefs at the system level.
\end{example}

\bibliographystyle{eptcs}
\bibliography{generic}

\begin{thebibliography}{10}
\providecommand{\bibitemdeclare}[2]{}
\providecommand{\surnamestart}{}
\providecommand{\surnameend}{}
\providecommand{\urlprefix}{Available at }
\providecommand{\url}[1]{\texttt{#1}}
\providecommand{\href}[2]{\texttt{#2}}
\providecommand{\urlalt}[2]{\href{#1}{#2}}
\providecommand{\doi}[1]{doi:\urlalt{https://doi.org/#1}{#1}}
\providecommand{\eprint}[1]{arXiv:\urlalt{https://arxiv.org/abs/#1}{#1}}
\providecommand{\bibinfo}[2]{#2}

\bibitemdeclare{misc}{baezStructured2020}
\bibitem{baezStructured2020}
\bibinfo{author}{John~C. \surnamestart Baez\surnameend} \& \bibinfo{author}{Kenny \surnamestart Courser\surnameend} (\bibinfo{year}{2020}): \emph{\bibinfo{title}{Structured {Cospans}}}, \doi{10.48550/arXiv.1911.04630}.
\newblock \urlprefix\url{http://arxiv.org/abs/1911.04630}.
\newblock \bibinfo{note}{ArXiv:1911.04630}.

\bibitemdeclare{article}{baezCompositional2017}
\bibitem{baezCompositional2017}
\bibinfo{author}{John~C. \surnamestart Baez\surnameend} \& \bibinfo{author}{Blake~S. \surnamestart Pollard\surnameend} (\bibinfo{year}{2017}): \emph{\bibinfo{title}{A compositional framework for reaction networks}}.
\newblock {\slshape \bibinfo{journal}{Reviews in Mathematical Physics}} \bibinfo{volume}{29}(\bibinfo{number}{09}), p. \bibinfo{pages}{1750028}, \doi{10.1142/S0129055X17500283}.
\newblock \urlprefix\url{https://www.worldscientific.com/doi/abs/10.1142/S0129055X17500283}.
\newblock \bibinfo{note}{ArXiv:1704.02051}.

\bibitemdeclare{article}{capucciTowards2022}
\bibitem{capucciTowards2022}
\bibinfo{author}{Matteo \surnamestart Capucci\surnameend}, \bibinfo{author}{Bruno \surnamestart Gavranović\surnameend}, \bibinfo{author}{Jules \surnamestart Hedges\surnameend} \& \bibinfo{author}{Eigil~Fjeldgren \surnamestart Rischel\surnameend} (\bibinfo{year}{2022}): \emph{\bibinfo{title}{Towards {Foundations} of {Categorical} {Cybernetics}}}.
\newblock {\slshape \bibinfo{journal}{Electronic Proceedings in Theoretical Computer Science}} \bibinfo{volume}{372}, pp. \bibinfo{pages}{235--248}, \doi{10.4204/EPTCS.372.17}.
\newblock \urlprefix\url{http://arxiv.org/abs/2105.06332v2}.

\bibitemdeclare{misc}{censiUncertainty2017}
\bibitem{censiUncertainty2017}
\bibinfo{author}{Andrea \surnamestart Censi\surnameend} (\bibinfo{year}{2017}): \emph{\bibinfo{title}{Uncertainty in {Monotone} {Co}-{Design} {Problems}}}, \doi{10.48550/arXiv.1609.03103}.
\newblock \urlprefix\url{http://arxiv.org/abs/1609.03103}.
\newblock \bibinfo{note}{ArXiv:1609.03103}.

\bibitemdeclare{misc}{choDisintegration2019}
\bibitem{choDisintegration2019}
\bibinfo{author}{Kenta \surnamestart Cho\surnameend} \& \bibinfo{author}{Bart \surnamestart Jacobs\surnameend} (\bibinfo{year}{2019}): \emph{\bibinfo{title}{Disintegration and {Bayesian} {Inversion} via {String} {Diagrams}}}, \doi{10.48550/arXiv.1709.00322}.
\newblock \urlprefix\url{http://arxiv.org/abs/1709.00322}.
\newblock \bibinfo{note}{ArXiv:1709.00322}.

\bibitemdeclare{phdthesis}{cruttwellNormed2008}
\bibitem{cruttwellNormed2008}
\bibinfo{author}{Geoff~S.H. \surnamestart Cruttwell\surnameend} (\bibinfo{year}{2008}): \emph{\bibinfo{title}{Normed {Spaces} and the {Change} of {Base} for {Enriched} {Categories}}}.
\newblock \bibinfo{type}{{PhD} {Thesis}}, \bibinfo{school}{Dalhousie University}, \bibinfo{address}{Halifax, Nova Scotia}.
\newblock \urlprefix\url{https://www.reluctantm.com/gcruttw/publications/thesis4.pdf}.

\bibitemdeclare{inproceedings}{eilenbergClosed1966}
\bibitem{eilenbergClosed1966}
\bibinfo{author}{Samuel \surnamestart Eilenberg\surnameend} \& \bibinfo{author}{G.~Max \surnamestart Kelly\surnameend} (\bibinfo{year}{1966}): \emph{\bibinfo{title}{Closed Categories}}.
\newblock In \bibinfo{editor}{S.~\surnamestart Eilenberg\surnameend}, \bibinfo{editor}{D.~K. \surnamestart Harrison\surnameend}, \bibinfo{editor}{S.~\surnamestart MacLane\surnameend} \& \bibinfo{editor}{H.~\surnamestart R{\"o}hrl\surnameend}, editors: {\slshape \bibinfo{booktitle}{Proceedings of the Conference on Categorical Algebra}}, \bibinfo{publisher}{Springer Berlin Heidelberg}, \bibinfo{address}{Berlin, Heidelberg}, pp. \bibinfo{pages}{421--562}, \doi{10.1007/978-3-642-99902-4_22}.

\bibitemdeclare{misc}{fongAlgebra2016}
\bibitem{fongAlgebra2016}
\bibinfo{author}{Brendan \surnamestart Fong\surnameend} (\bibinfo{year}{2016}): \emph{\bibinfo{title}{The {Algebra} of {Open} and {Interconnected} {Systems}}}, \doi{10.48550/arXiv.1609.05382}.
\newblock \urlprefix\url{http://arxiv.org/abs/1609.05382}.
\newblock \bibinfo{note}{ArXiv:1609.05382}.

\bibitemdeclare{inproceedings}{fongBackpropFunctorCompositional2019}
\bibitem{fongBackpropFunctorCompositional2019}
\bibinfo{author}{Brendan \surnamestart Fong\surnameend}, \bibinfo{author}{David \surnamestart Spivak\surnameend} \& \bibinfo{author}{Remy \surnamestart Tuyeras\surnameend} (\bibinfo{year}{2019}): \emph{\bibinfo{title}{Backprop as {{Functor}}: {{A}} Compositional Perspective on Supervised Learning}}.
\newblock In: {\slshape \bibinfo{booktitle}{2019 34th {{Annual ACM}}/{{IEEE Symposium}} on {{Logic}} in {{Computer Science}} ({{LICS}})}}, \bibinfo{publisher}{IEEE}, \bibinfo{address}{Vancouver, BC, Canada}, pp. \bibinfo{pages}{1--13}, \doi{10.1109/LICS.2019.8785665}.

\bibitemdeclare{misc}{fongHypergraph2019}
\bibitem{fongHypergraph2019}
\bibinfo{author}{Brendan \surnamestart Fong\surnameend} \& \bibinfo{author}{David~I. \surnamestart Spivak\surnameend} (\bibinfo{year}{2019}): \emph{\bibinfo{title}{Hypergraph {Categories}}}, \doi{10.48550/arXiv.1806.08304}.
\newblock \urlprefix\url{http://arxiv.org/abs/1806.08304}.
\newblock \bibinfo{note}{ArXiv:1806.08304}.

\bibitemdeclare{book}{fongInvitation2019}
\bibitem{fongInvitation2019}
\bibinfo{author}{Brendan \surnamestart Fong\surnameend} \& \bibinfo{author}{David~I. \surnamestart Spivak\surnameend} (\bibinfo{year}{2019}): \emph{\bibinfo{title}{An {Invitation} to {Applied} {Category} {Theory}: {Seven} {Sketches} in {Compositionality}}}, \bibinfo{edition}{1} edition.
\newblock \bibinfo{publisher}{Cambridge University Press}, \doi{10.1017/9781108668804}.
\newblock \urlprefix\url{https://www.cambridge.org/core/product/identifier/9781108668804/type/book}.
\newblock \bibinfo{note}{ArXiv:1803.05316.}

\bibitemdeclare{misc}{fongSupplying2020}
\bibitem{fongSupplying2020}
\bibinfo{author}{Brendan \surnamestart Fong\surnameend} \& \bibinfo{author}{David~I. \surnamestart Spivak\surnameend} (\bibinfo{year}{2020}): \emph{\bibinfo{title}{Supplying bells and whistles in symmetric monoidal categories}}, \doi{10.48550/arXiv.1908.02633}.
\newblock \urlprefix\url{http://arxiv.org/abs/1908.02633}.
\newblock \bibinfo{note}{ArXiv:1908.02633}.

\bibitemdeclare{misc}{fritzSynthetic2020}
\bibitem{fritzSynthetic2020}
\bibinfo{author}{Tobias \surnamestart Fritz\surnameend} (\bibinfo{year}{2020}): \emph{\bibinfo{title}{A synthetic approach to {Markov} kernels, conditional independence and theorems on sufficient statistics}}, \doi{10.48550/arXiv.1908.07021}.
\newblock \urlprefix\url{http://arxiv.org/abs/1908.07021}.
\newblock \bibinfo{note}{ArXiv:1908.07021}.

\bibitemdeclare{misc}{garnerEnriched2015}
\bibitem{garnerEnriched2015}
\bibinfo{author}{Richard \surnamestart Garner\surnameend} \& \bibinfo{author}{Michael \surnamestart Shulman\surnameend} (\bibinfo{year}{2015}): \emph{\bibinfo{title}{Enriched categories as a free cocompletion}}, \doi{10.48550/arXiv.1301.3191}.
\newblock \urlprefix\url{http://arxiv.org/abs/1301.3191}.
\newblock \bibinfo{note}{ArXiv:1301.3191}.

\bibitemdeclare{book}{halpernReasoning2005}
\bibitem{halpernReasoning2005}
\bibinfo{author}{Joseph~Y. \surnamestart Halpern\surnameend} (\bibinfo{year}{2017}): \emph{\bibinfo{title}{Reasoning about uncertainty}}, \bibinfo{edition}{second} edition.
\newblock \bibinfo{publisher}{MIT Press}, \bibinfo{address}{Cambridge, Mass.}, \doi{10.7551/mitpress/10951.001.0001}.

\bibitemdeclare{misc}{huangComposable2025}
\bibitem{huangComposable2025}
\bibinfo{author}{Yujun \surnamestart Huang\surnameend}, \bibinfo{author}{Marius \surnamestart Furter\surnameend} \& \bibinfo{author}{Gioele \surnamestart Zardini\surnameend} (\bibinfo{year}{2025}): \emph{\bibinfo{title}{On {Composable} and {Parametric} {Uncertainty} in {Systems} {Co}-{Design}}}, \doi{10.48550/arXiv.2504.02766}.
\newblock \urlprefix\url{http://arxiv.org/abs/2504.02766}.
\newblock \bibinfo{note}{ArXiv:2504.02766}.

\bibitemdeclare{misc}{johnson2Dimensional2020}
\bibitem{johnson2Dimensional2020}
\bibinfo{author}{Niles \surnamestart Johnson\surnameend} \& \bibinfo{author}{Donald \surnamestart Yau\surnameend} (\bibinfo{year}{2020}): \emph{\bibinfo{title}{2-{Dimensional} {Categories}}}, \doi{10.48550/arXiv.2002.06055}.
\newblock \urlprefix\url{http://arxiv.org/abs/2002.06055}.
\newblock \bibinfo{note}{ArXiv:2002.06055}.

\bibitemdeclare{book}{kellyBasic1982}
\bibitem{kellyBasic1982}
\bibinfo{author}{Gregory~M. \surnamestart Kelly\surnameend} (\bibinfo{year}{1982}): \emph{\bibinfo{title}{Basic concepts of enriched category theory}}.
\newblock {\slshape \bibinfo{series}{London {Mathematical} {Society} lecture note series}}~\bibinfo{volume}{64}, \bibinfo{publisher}{Cambridge Univ. Pr}, \bibinfo{address}{Cambridge}.

\bibitemdeclare{book}{macLaneCategories1998}
\bibitem{macLaneCategories1998}
\bibinfo{author}{Saunders \surnamestart Mac~Lane\surnameend} (\bibinfo{year}{1998}): \emph{\bibinfo{title}{Categories for the working mathematician}}, \bibinfo{edition}{second} edition.
\newblock {\slshape \bibinfo{series}{Graduate texts in mathematics}}~\bibinfo{volume}{5}, \bibinfo{publisher}{Springer Science+Business Media, LLC}, \bibinfo{address}{New York}, \doi{10.1007/978-1-4757-4721-8}.

\bibitemdeclare{article}{stayCompact2016}
\bibitem{stayCompact2016}
\bibinfo{author}{Michael \surnamestart Stay\surnameend} (\bibinfo{year}{2016}): \emph{\bibinfo{title}{Compact {Closed} {Bicategories}}}.
\newblock {\slshape \bibinfo{journal}{Theory and Applications of Categories}} \bibinfo{volume}{31}(\bibinfo{number}{26}), pp. \bibinfo{pages}{755--798}.
\newblock \urlprefix\url{http://arxiv.org/abs/1301.1053}.
\newblock \bibinfo{note}{ArXiv:1301.1053}.

\bibitemdeclare{book}{walleyStatistical1991}
\bibitem{walleyStatistical1991}
\bibinfo{author}{Peter \surnamestart Walley\surnameend} (\bibinfo{year}{1991}): \emph{\bibinfo{title}{Statistical {Reasoning} with {Imprecise} {Probabilities}}}.
\newblock \bibinfo{series}{Monographs on {Statistics} and {Applied} {Probability}}, \bibinfo{publisher}{Springer US}, \bibinfo{address}{Boston, MA s.l}, \doi{10.1007/978-1-4899-3472-7}.

\bibitemdeclare{phdthesis}{zardiniCoDesignComplexSystems2023}
\bibitem{zardiniCoDesignComplexSystems2023}
\bibinfo{author}{Gioele \surnamestart Zardini\surnameend} (\bibinfo{year}{2023}): \emph{\bibinfo{title}{Co-{{Design}} of {{Complex Systems}}: {{From Autonomy}} to {{Future Mobility Systems}}}}.
\newblock Ph.D. thesis, \bibinfo{school}{ETH Zurich}, \doi{10.3929/ETHZ-B-000648075}.

\end{thebibliography}

\appendix

\section{Symmetric monoidal categories}\label{app:monoidal-categories}
This section collects the key definitions for SMCs. A detailed exposition can be found in \cite{macLaneCategories1998}.

\begin{definition}[Monoidal category] \label{def:app-monoidal-category}
    A \emph{monoidal category} $(\V, \otimes, I)$, consists of a category, $\V$, a functor $\otimes: \V \times \V \to \V$, a unit object $I \in \Ob(\V)$, and natural isomorphisms with components
    \[
        \alpha_{A,B,C} : (A \otimes B) \otimes C \cong A \otimes (B \otimes C), \qquad 
        \rho_A : A \otimes I \cong A, \qquad
        \lambda_A : I \otimes A \cong A,
    \]
    satisfying the pentagon and triangle identities:
    \[
    \begin{tikzcd}[cramped, column sep=tiny, row sep = scriptsize]
    	{((A \otimes B) \otimes C) \otimes D} && {(A \otimes (B \otimes C)) \otimes D} \\
    	{(A \otimes B) \otimes (C \otimes D)} && {A \otimes ((B \otimes C) \otimes D)} \\
    	& {A \otimes (B \otimes (C \otimes D))}
    	\arrow["{\alpha_{A,B,C} \otimes 1}", from=1-1, to=1-3]
    	\arrow["{\alpha_{A \otimes B, C, D}}"', from=1-1, to=2-1]
    	\arrow["{\alpha_{A,B \otimes C,D}}", from=1-3, to=2-3]
    	\arrow["{\alpha_{A,B,C \otimes D}}"', from=2-1, to=3-2]
    	\arrow["{1 \otimes \alpha_{B,C,D}}", from=2-3, to=3-2]
    \end{tikzcd}
    \qquad \qquad
    \begin{tikzcd}[cramped]
    	& {(A \otimes I) \otimes B} \\
    	{A \otimes B} & {A \otimes (I \otimes B)}
    	\arrow["{\rho_A \otimes 1}"', from=1-2, to=2-1]
    	\arrow["{\alpha_{A,I,B}}", from=1-2, to=2-2]
    	\arrow["{1 \otimes \lambda_B}", from=2-2, to=2-1]
    \end{tikzcd} \]
\end{definition}

\begin{definition}(Symmetric monoidal category) \label{def:app-symmetric-monoidal-category}
    A monoidal category $(\V, \otimes, I)$ is called \emph{symmetric} if it comes equipped with a natural transformation $\sigma_{A,B}: A \otimes B \to B \otimes A$ that satisfies
    \[
    \begin{tikzcd}[cramped, sep = scriptsize]
    	{(A \otimes B) \otimes C} & {(B \otimes A) \otimes C} & {B \otimes (A \otimes C)} \\
    	{A \otimes (B \otimes C)} & {(B \otimes C) \otimes A} & {B \otimes (C \otimes A)}
    	\arrow["{\sigma_{A,B} \otimes 1}", from=1-1, to=1-2]
    	\arrow["{\alpha_{A,B,C}}"', from=1-1, to=2-1]
    	\arrow["{\alpha_{B,A,C}}", from=1-2, to=1-3]
    	\arrow["{1 \otimes \sigma_{A,C}}", from=1-3, to=2-3]
    	\arrow["{\sigma_{A, B \otimes C}}"', from=2-1, to=2-2]
    	\arrow["{\alpha_{B,C,A}}"', from=2-2, to=2-3]
    \end{tikzcd}
    \qquad \quad
    \begin{tikzcd}[cramped, sep = scriptsize]
    	{A \otimes I} & {I \otimes A} \\
    	& A
    	\arrow["{\sigma_{A,I}}", from=1-1, to=1-2]
    	\arrow["{\rho_A}"', from=1-1, to=2-2]
    	\arrow["{\lambda_A}", from=1-2, to=2-2]
    \end{tikzcd}
    \qquad \quad
    \begin{tikzcd}[cramped, sep = scriptsize]
    	{A \otimes B} & {B \otimes A} \\
    	& {A \otimes B}
    	\arrow["{\sigma_{A,B}}", from=1-1, to=1-2]
    	\arrow[equals, from=1-1, to=2-2]
    	\arrow["{\sigma_{B,A}}", from=1-2, to=2-2]
    \end{tikzcd}
    \]
\end{definition}

\begin{definition}[Monoidal functor] \label{def:app-monoidal-functor}
    Let $(\V,\otimes,I)$ and $(\W,\bullet, J)$ be monoidal categories. A \emph{(lax) monoidal functor} $N: \V \to \W$ is a functor from $\V$ to $\W$, along with natural transformations whose components
    \[ N_\epsilon : J \to NI, \qquad \qquad \widetilde N_{A,B}: NA \bullet NB \to N(A \otimes B), \]
    satisfy associativity and unitality conditions:
    
    \begin{equation}\label{tikzeq:monoidal-functor}
        \begin{tikzcd}[cramped, sep = scriptsize]
        	{(NA \bullet NB) \bullet NC} && {NA \bullet (NB \bullet NC)} \\
        	{N(A \otimes B) \bullet NC} && {NA \bullet N(B \otimes C)} \\
        	{N((A \otimes B) \otimes C)} && {N(A \otimes (B \otimes C))}
        	\arrow["{\alpha_\D}", from=1-1, to=1-3]
        	\arrow["{\widetilde N_{A,B} \bullet 1}"', from=1-1, to=2-1]
        	\arrow["{1 \bullet \widetilde N_{B,C}}", from=1-3, to=2-3]
        	\arrow["{\widetilde N_{A \otimes B, C}}"', from=2-1, to=3-1]
        	\arrow["{\widetilde N_{A,B \otimes C}}", from=2-3, to=3-3]
        	\arrow["{N\alpha_\C}"', from=3-1, to=3-3]
        \end{tikzcd}
    \qquad \qquad \qquad
    \begin{tikzcd}[cramped,column sep=3.15em]
    	{NA \bullet J} & {NA \bullet NI} \\
    	NA & {N(A \otimes I)} \\
    	{J \bullet NB} & {NI \bullet NB} \\
    	NB & {N(I \otimes B)}
    	\arrow["{1 \bullet N_\epsilon}", from=1-1, to=1-2]
    	\arrow["{\rho_\D}"', from=1-1, to=2-1]
    	\arrow["{\widetilde N_{A,I}}", from=1-2, to=2-2]
    	\arrow["{N\rho_\C}"', tail reversed, no head, from=2-1, to=2-2]
    	\arrow["{N_\epsilon \bullet 1}", from=3-1, to=3-2]
    	\arrow["{\lambda_\D}"', from=3-1, to=4-1]
    	\arrow["{\widetilde N_{I, B}}", from=3-2, to=4-2]
    	\arrow["{N\lambda_\C}"', tail reversed, no head, from=4-1, to=4-2]
    \end{tikzcd}
    \end{equation}
    If $N_\epsilon$ and $\widetilde N$ are isomorphisms, we call $N$ \emph{strong}; if they are identities, we call $N$ \emph{strict}. If $\V$ and $\W$ are symmetric, then $N$ is called \emph{symmetric} if it preserves $\sigma$:
    \begin{equation}\label{tikzeq:monoidal-functor-symmetric}
    \begin{tikzcd}[cramped]
    	{NA \bullet NB} & {NB \bullet NA} \\
    	{N(A \otimes B)} & {N(B \otimes A)}
    	\arrow["{\sigma_\W}", from=1-1, to=1-2]
    	\arrow["{\widetilde N_{A,B}}"', from=1-1, to=2-1]
    	\arrow["{\widetilde N_{B,A}}", from=1-2, to=2-2]
    	\arrow["{N(\sigma_\V)}"', from=2-1, to=2-2]
    \end{tikzcd}
    \end{equation}
\end{definition}

\begin{definition}[Monoidal transformation] \label{def:app-monoidal-transformation}
    Given monoidal functors $N,M : (\V, \otimes, I) \to (\W, \bullet, J)$, a \emph{monoidal natural transformation} $\tau: N \Rightarrow M$ is a natural transformation from $N$ to $M$ which preserves comparison:
    \[
    \begin{tikzcd}[cramped]
    	{NA \bullet NB} & {N(A \otimes B)} \\
    	{MA \bullet MB} & {M(A \otimes B)}
    	\arrow["{\widetilde N}", from=1-1, to=1-2]
    	\arrow["{\tau_A \bullet \tau_B}"', from=1-1, to=2-1]
    	\arrow["{\tau_{A \otimes B}}", from=1-2, to=2-2]
    	\arrow["{\widetilde M}"', from=2-1, to=2-2]
    \end{tikzcd}
    \qquad \qquad \qquad
    \begin{tikzcd}[cramped]
    	J \\
    	NI & MI
    	\arrow["{N_\epsilon}"', from=1-1, to=2-1]
    	\arrow["{M_\epsilon}", from=1-1, to=2-2]
    	\arrow["{\tau_I}"', from=2-1, to=2-2]
    \end{tikzcd}
    \]
\end{definition}

\section{Enriched category theory} \label{app:enriched-category-theory}

This section provides the basic definitions required to define the 2-category $\VCat$ of $\V$-enriched categories. More detail can be found in \cite{kellyBasic1982}.

\begin{definition}[$\V$-category] \label{app:V-category}
    Let $(\V, \otimes, I)$ be a monoidal category. A \emph{$\V$-enriched category} $\C$ consists of
    \begin{compactitem}
        \item[(i)] a set of objects $\Ob(\C)$, and for all $A,B,C \in \Ob(\C)$,
        \item[(ii)] a hom-object $\C(A,B) \in \V$,
        \item[(iii)] a composition arrow $\then_{A,B,C} :\C(A,B) \otimes \C(B,C) \to \C(A,C),$
        \item[(iv)] and an identity arrow $\id_A : I \to \C(A,A)$.
    \end{compactitem}
    These data are required to make the following associativity and unitality diagrams commute:
\[
    \begin{tikzcd}[cramped]
    	{(\C(A,B) \otimes \C(B,C)) \otimes \C(C,D)} & {\C(A,C) \otimes \C(C,D)} \\
    	{\C(A,B) \otimes (\C(B,C) \otimes \C(C,D))} \\
    	{\C(A,B) \otimes \C(B,D)} & {\C(A,D)}
    	\arrow["{\then \otimes 1}", from=1-1, to=1-2]
    	\arrow["\alpha"', from=1-1, to=2-1]
    	\arrow["\then", from=1-2, to=3-2]
    	\arrow["{1 \otimes \then}"', from=2-1, to=3-1]
    	\arrow["\then"', from=3-1, to=3-2]
    \end{tikzcd}
    \qquad\qquad
    \begin{tikzcd}[cramped, sep = scriptsize]
    	{I \otimes \C(A,B)} \\
    	{\C(A,A) \otimes \C(A,B)} & {\C(A,B)} \\
    	{\C(A,B) \otimes I} \\
    	{\C(A,B) \otimes \C(B,B)} & {\C(A,B)}
    	\arrow["{\id_A \otimes 1}"', from=1-1, to=2-1]
    	\arrow["\lambda", from=1-1, to=2-2]
    	\arrow["\then"', from=2-1, to=2-2]
    	\arrow["{1 \otimes \id_B}", from=3-1, to=4-1]
    	\arrow["\rho", from=3-1, to=4-2]
    	\arrow["\then"', from=4-1, to=4-2]
    \end{tikzcd}
\]
We will assume that $\V$ is symmetric monoidal, although the basic definitions do not require this.
\end{definition}

\begin{definition}[$\V$-functor]
    Let $\C$ and $\D$ be $\V$-categories. A \emph{$\V$-functor} $F: \C \to \D$ consists of a function $F_0 :\Ob(\C) \to \Ob(\D)$, along with $\V$-arrows $F_{A,B} : \C(A,B) \to \D(F_0A,F_0B)$, indexed by $A,B \in \Ob(\C)$. These data must preserve composition and identities:
    \[\begin{tikzcd}[cramped, row sep = scriptsize]
    	{\C(A,B) \otimes \C(B,C)} && {\D(FA,FB) \otimes \D(FB,FC)} \\
    	{\C(A,C)} && {\D(FA,FC)}
    	\arrow["{F_{A,B} \otimes F_{B,C}}", from=1-1, to=1-3]
    	\arrow["\then"', from=1-1, to=2-1]
    	\arrow["\then", from=1-3, to=2-3]
    	\arrow["{F_{A,C}}", from=2-1, to=2-3]
    \end{tikzcd}
    \qquad \qquad
    \begin{tikzcd}[cramped, row sep = scriptsize]
    	I \\
    	{\C(A,A)} & {\D(FA,FA)}
    	\arrow["{\id_A}"', from=1-1, to=2-1]
    	\arrow["{\id_{FA}}", from=1-1, to=2-2]
    	\arrow["{F_{A,A}}", from=2-1, to=2-2]
    \end{tikzcd}\]
    As above, we drop subscripts on $F$ if they are clear from context.
\end{definition}

\begin{definition}[Composition of $\V$-functors]
    Given $\V$-functors $F : \C \to \D$ and $G: \D \to \E$, we define their \emph{composite} $F \then G$ on objects as $(F \then G)_0 := F_0 \then G_0$, and on arrows as $(F \then G)_{A,B} := F_{A,B} \then G_{FA,FB}$. For any $\V$-category $\C$, the \emph{identity} $\V$-functor $\Id_{\C}$ is defined by $\Id_0 := \id_{\Ob(\C)}$ and $\Id_{A,B} := \id_{\C(A,B)}$.
\end{definition}

\begin{definition}[$\V$-natural transformation] \label{app:V-natural-transformation}
    Let $F,G : \C \to \D$ be $\V$-functors. A \emph{$\V$-natural transformation} $\tau: F \Rightarrow G$ is a family of $\V$-arrows $\tau_A: I \to \D(FA,GA)$, indexed by $A \in \Ob(\C)$, that satisfy the following $\V$-naturality conditions:
    \[
    \begin{tikzcd}[cramped, row sep = tiny]
    	& {I \otimes \C(A,B)} && {\D(FA,GA) \otimes \D(GA,GB)} \\
    	{\C(A,B)} &&&& {\D(FA,GB)} \\
    	& {\C(A,B) \otimes I} && {\D(FA,FB) \otimes \D(FB,GB)}
    	\arrow["{\tau_A \otimes G}", from=1-2, to=1-4]
    	\arrow["\then", from=1-4, to=2-5]
    	\arrow["{\lambda^{-1}}", from=2-1, to=1-2]
    	\arrow["{\rho^{-1}}"', from=2-1, to=3-2]
    	\arrow["{F \otimes \tau_B}", from=3-2, to=3-4]
    	\arrow["\then"', from=3-4, to=2-5]
    \end{tikzcd}
    \]
    $\V$-natural transformations can be composed vertically and horizontally, as explained in \Cref{app:enriched-category-theory}.
\end{definition}

\noindent There are two ways to compose $\V$-natural transformations, vertical and horizontal:

\[
\qquad
\begin{tikzcd}[cramped]
	\C && \D
	\arrow[""{name=0, anchor=center, inner sep=0}, "F", curve={height=-18pt}, from=1-1, to=1-3]
	\arrow[""{name=1, anchor=center, inner sep=0}, "H"', curve={height=18pt}, from=1-1, to=1-3]
	\arrow[""{name=2, anchor=center, inner sep=0}, "G"{description}, from=1-1, to=1-3]
	\arrow["\tau", shorten <=4pt, shorten >=4pt, Rightarrow, from=0, to=2]
	\arrow["\theta", shorten <=4pt, shorten >=4pt, Rightarrow, from=2, to=1]
\end{tikzcd}
\qquad \qquad \qquad \qquad
\begin{tikzcd}[cramped]
	\C && \D && \E
	\arrow[""{name=0, anchor=center, inner sep=0}, "{F_1}"{description}, curve={height=-18pt}, from=1-1, to=1-3]
	\arrow[""{name=1, anchor=center, inner sep=0}, "{G_1}"{description}, curve={height=18pt}, from=1-1, to=1-3]
	\arrow[""{name=2, anchor=center, inner sep=0}, "{F_2}"{description}, curve={height=-18pt}, from=1-3, to=1-5]
	\arrow[""{name=3, anchor=center, inner sep=0}, "{G_2}"{description}, curve={height=18pt}, from=1-3, to=1-5]
	\arrow["{\tau_1}", shorten <=5pt, shorten >=5pt, Rightarrow, from=0, to=1]
	\arrow["{\tau_2}", shorten <=5pt, shorten >=5pt, Rightarrow, from=2, to=3]
\end{tikzcd}
\]

\begin{definition}[Vertical composition of $\V$-natural transformations]
    Given $\V$-functors $F,G,H: \C \to \D$ and $\V$-natural transformations $\tau: F \Rightarrow G$ and $\theta: G \Rightarrow H$, we define their \emph{vertical composite} $\tau \vertthen \theta : F \Rightarrow H$ to have components  $(\tau \vertthen \theta)_A$  given by the composite
    \[
        I \cong I \otimes I \overset{\tau_A \otimes \theta_A}{\longrightarrow} \D(FA,GA) \otimes \D(GA,HA) \overset{\then}{\longrightarrow} \D(FA,HA)
    \]
\end{definition}

\begin{definition}[Horizontal composition of $\V$-natural transformations]
    Given $\V$-functors $F_1,G_1 : \C \to \D$ and $F_2, G_2: \D \to \E$, along with $\V$-natural transformations $\tau_1: F_1 \Rightarrow G_1$ and $\tau_2: F_2 \Rightarrow G_2$, we define their \emph{horizontal composite} $\tau_1 \horthen \tau_2: F_1 \then F_2 \Rightarrow G_1 \then G_2$ to have components $(\tau_1 \horthen \tau_2)_A$ given by either one of the composites in the following commutative diagram:
    \[
    \begin{tikzcd}[cramped, row sep = scriptsize]
    	&& {I \otimes \D(F_1 A, G_1 A)} && {\E(F_2 F_1 A , G_2 F_1 A) \otimes \E(G_2F_1A,G_2G_1A)} \\
    	I & {\D(F_1 A, G_1A)} &&& {\E(F_2F_1A,G_2G_1A).} \\
    	&& {\D(F_1A,G_1A) \otimes I} && {\E( F_2F_1A, F_2G_1A) \otimes \E(F_2G_1A, G_2G_1A)}
    	\arrow["{\tau_{2 F_1 A} \otimes G_2}", from=1-3, to=1-5]
    	\arrow["\then", from=1-5, to=2-5]
    	\arrow["{\tau_{1A}}", from=2-1, to=2-2]
    	\arrow["{\lambda^{-1}}", from=2-2, to=1-3]
    	\arrow["{\rho^{-1}}"', from=2-2, to=3-3]
    	\arrow["{F_2 \otimes \tau_{2G_1A}}", from=3-3, to=3-5]
    	\arrow["\then"', from=3-5, to=2-5]
    \end{tikzcd}
    \]
\end{definition}

\section{Markov categories} \label{app:markov-categories}

We give a brief introduction of to Markov categories. We first describe Markov kernels and the category they inhabit. The following is standard, see \cite{choDisintegration2019} or \cite{fritzSynthetic2020}.

\begin{definition} \label{def:markov-kernel}
  A \emph{measurable space} $(X,\Sigma_X)$ is a set $X$ equipped with a $\sigma$-algebra $\Sigma_X$. Given measurable spaces $(X,\Sigma_X)$ and $(Y,\Sigma_Y)$, a \emph{Markov kernel} $f \colon X \rightsquigarrow Y$ is a function $f \colon \Sigma_Y \times X \to [0,1]$ such that 
  \begin{compactitem}
    \item[(i)] for each $x \in X$, $f(- \mid x)$ is a probability measure on $(Y,\Sigma_Y)$,
    \item[(ii)] for each $B \in \Sigma_Y$, $f(B \mid =) \colon X \to [0,1]$ is $\Sigma_X$-measurable.
  \end{compactitem}
  We think of $f(B \mid x)$ as the probability that $y \in B$, given $x$. Markov kernels $f \colon X \rightsquigarrow Y$ and $g \colon Y \rightsquigarrow Z$ can be composed. For any $C \in \Sigma_Z$ and $x \in X$,
  $$ (g \circ f) (C \mid x) := \int_{y \in Y} g(C \mid y)  \; d\!f (- \mid x). $$
  The Markov kernel $\id_X \colon X \rightsquigarrow X$ given by $\id_X(A \mid x) := \delta_x(A) := 1(x \in A)$ serves as the identity. Composition is associative by the monotone convergence theorem. We denote the \emph{category of measurable spaces and Markov kernels} by $\mathsf{Stoch}$. The full subcategory of standard Borel spaces\footnote{A measurable space $(X, \Sigma_X)$ is standard Borel if it is isomorphic to a separable complete metric space with the Borel $\sigma$-algebra. In particular, the closed unit interval $[0,1]$ with its Borel $\sigma$-algebra is standard Borel.} is denoted $\mathsf{BorelStoch}$.
\end{definition}

\begin{definition}
  $\mathsf{Stoch}$ has the structure of a symmetric monoidal category with $(X, \Sigma_X) \otimes (Y, \Sigma_Y) := (X \times Y, \Sigma_X \otimes \Sigma_Y)$, where $\Sigma_X \otimes \Sigma_Y$ is the product $\sigma$-algebra generated by the set of rectangles $\{A \times B \mid A \in \Sigma_X, B \in \Sigma_Y \}$. The singleton space $I := \{*\}$ acts as monoidal unit. On maps $f_1 \colon X_1 \rightsquigarrow Y_1$ and $f_2 \colon X_2 \rightsquigarrow Y_2$, the product $f_1 \otimes f_2$ is defined on rectangles by 
  $$(f_1 \otimes f_2)(B_1 \times B_2 \mid x_1, x_2) := f_1(B_1 \mid x_1) f_2(B_2 \mid x_2).$$ 
  $\mathsf{Stoch}$ has two further pieces of useful structure. The \emph{copy} map $\copyMor_X \colon X \rightsquigarrow X \otimes X$ is given on rectangles by $\copyMor_X(A \times B \mid x) := \delta_x(A)\delta_x(B)$, while the \emph{delete} map $\delMor_X: X \rightsquigarrow I$ is given by $\delMor_X(A \mid x) := \delta_*(A)$.
\end{definition}

\noindent $\mathsf{Stoch}$ provides the prototypical example of a category in which one can do probability theory \cite{fritzSynthetic2020}. Markov kernels express conditional probabilities and specialize to a variety of standard concepts. Distributions on $X$ can be represented by maps $I \rightsquigarrow X$, while random variables $f \colon X \to Y$ can be lifted to Markov kernels. Moreover, $\otimes$ and $\delMor$ can express joint distributions and marginalization. Abstracting these structures yields the definition of a Markov category.
\begin{definition}[Markov category]
  A \emph{Markov category} $\mathsf{C}$ is a symmetric monoidal category $(C,\otimes, I)$ where each $X \in \mathsf{C}$ is equipped with copy $\copyMor_X \colon X \to X \otimes X$ and delete $\delMor_X \colon X \to I$, in string diagrams
  $$\copyMor_X \: = \: \tikzfig{anc/diagrams/markov-cats/copy} \qquad \text{and} \qquad \delMor_X \: = \: \tikzfig{anc/diagrams/markov-cats/del}$$ 
  satisfying the commutative comonoid equations, compatibility with $\otimes$, and naturality of $\delMor$:
  $$ \tikzfig{anc/diagrams/markov-cats/copy_assoc_left} \: = \: \tikzfig{anc/diagrams/markov-cats/copy_assoc_right}
  \qquad  \qquad
  \tikzfig{anc/diagrams/markov-cats/copy_comm_left} \: = \: \tikzfig{anc/diagrams/markov-cats/copy_comm_right}
  \qquad \qquad
  \tikzfig{anc/diagrams/markov-cats/copy_unit_left} \: = \:  \tikzfig{anc/diagrams/markov-cats/copy_unit_middle} =  \tikzfig{anc/diagrams/markov-cats/copy_unit_right} $$
  $$ \tikzfig{anc/diagrams/markov-cats/del_tensor_left} \quad = \quad \tikzfig{anc/diagrams/markov-cats/del_tensor_right}
  \qquad \qquad
  \tikzfig{anc/diagrams/markov-cats/copy_tensor_left} \quad = \quad \tikzfig{anc/diagrams/markov-cats/copy_tensor_right}$$
  \vspace*{0.2em}
  \[ \tikzfig{anc/diagrams/markov-cats/nat_del_left} \quad = \quad \tikzfig{anc/diagrams/markov-cats/nat_del_right} 
  \qedhere \]
\end{definition}

\begin{definition}
  A map $f$ in a Markov category is called \emph{deterministic} if
  \[ \tikzfig{anc/diagrams/markov-cats/deterministic_left} \quad = \quad \tikzfig{anc/diagrams/markov-cats/deterministic_right} 
  \]
  This means that copying the output of $f$ is the same as generating two separate outputs using $f$.
\end{definition}

\noindent In $\mathsf{Stoch}$, determinism states $f(B \cap C \mid x) = f(B \mid x)f(C \mid x)$ for all measurable $B$ and $C$. Thus any measurable function $\bar f \colon X \to Y$ induces a deterministic map $f \colon X \rightsquigarrow Y$ by $f(B \mid x) := \delta_{\bar f(x)}(A)$. In $\mathsf{BorelStoch}$ every deterministic map is of this form, as explained in \cite[Examples 10.4-10.5]{fritzSynthetic2020}.

\section{Transferring SMC structure} 

\subsection{Transferring SMC structure along symmetric monoidal functors} \label{app:transfer-SMC-symm-N}
This section gives a more detailed proof of \cref{prop:change-of-base-monoidal-transfer}. 
We first show that the defined monoidal product is a $\W$-functor.
When $N$ is symmetric, $\widetilde N: N(-) \bullet N(=) \to N(- \otimes =)$ is a monoidal transformation \cite[Prop.~5.3.6]{cruttwellNormed2008} and hence induces $\W$-functor $(\widetilde N)_* :N_*\C \otimes N_*\C \to N_*(\C \otimes \C)$ by \cref{prop:change-of-base-2-functorial}. Thus, $\boxtimes^N$ can be obtained as the composition of $\W$-functors $(\widetilde N)_* \then (N_*\boxtimes)$.

\noindent We now turn to the proposed coherence isomorphisms. On the face of it, $N_*a$, $N_*r$, $N_*l$ and $N_*s$ seem to have the incorrect source and target functors. For instance, $N_*a$ has signature $N(\boxtimes \then (\boxtimes \otimes 1)) \Rightarrow  N(\alpha_\VCat \then \boxtimes \then (1 \otimes \boxtimes))$, rather than the required $\boxtimes^N \then (\boxtimes^N \otimes 1)) \Rightarrow  \alpha_\WCat \then \boxtimes^N \then (1 \otimes \boxtimes^N)$. However, the following diagram shows that we may pre-whisker $N_*a$ with the identity-on-objects functor $((\widetilde N)_* \otimes 1) \then (\widetilde N)_*$ to obtain the desired coherence $\W$-natural isomorphism that also has components $N_*a$:
\[
\begin{tikzcd}[cramped]
	& {N_*(\C \otimes \C) \otimes N_*\C} & {N_*\C \otimes N_*\C} \\
	{(N_*\C \otimes N_*\C) \otimes N_*\C} & {N_*((\C \otimes \C) \otimes \C)} & {N_*(\C \otimes \C)} \\
	{N_*\C \otimes (N_*\C \otimes N_*\C)} & {N_*(\C \otimes (\C \otimes \C))} & {N_*(\C \otimes \C)} & {N_*\C} \\
	& {N_*\C \otimes N_*(\C \otimes \C) } & {N_*\C \otimes N_*\C}
	\arrow["{N\boxtimes \otimes 1}", from=1-2, to=1-3]
	\arrow[""{name=0, anchor=center, inner sep=0}, "{(\widetilde N)_*}"', from=1-2, to=2-2]
	\arrow[""{name=1, anchor=center, inner sep=0}, "{(\widetilde N)_*}", from=1-3, to=2-3]
	\arrow["{(\widetilde N)_* \otimes 1}", from=2-1, to=1-2]
	\arrow[""{name=2, anchor=center, inner sep=0}, "{\alpha_\WCat}"', from=2-1, to=3-1]
	\arrow[""{name=3, anchor=center, inner sep=0}, "{N(\boxtimes \otimes 1)}", from=2-2, to=2-3]
	\arrow[""{name=4, anchor=center, inner sep=0}, "{N(\alpha_\VCat)}"', from=2-2, to=3-2]
	\arrow["{N\boxtimes}", from=2-3, to=3-4]
	\arrow["{1 \otimes (\widetilde N)_*}"', from=3-1, to=4-2]
	\arrow[""{name=5, anchor=center, inner sep=0}, "{N(1 \otimes \boxtimes)}"', from=3-2, to=3-3]
	\arrow["{N\boxtimes}"', from=3-3, to=3-4]
	\arrow[""{name=6, anchor=center, inner sep=0}, "{(\widetilde N)_*}", from=4-2, to=3-2]
	\arrow["{1 \otimes N\boxtimes}"', from=4-2, to=4-3]
	\arrow[""{name=7, anchor=center, inner sep=0}, "{(\widetilde N)_*}"', from=4-3, to=3-3]
	\arrow["{(\text{nat } \widetilde N)}"{description}, curve={height=-6pt}, draw=none, from=0, to=1]
	\arrow["{(N \text{ mon.})}"{description}, draw=none, from=2, to=4]
	\arrow["{N_*a}", shorten <=6pt, shorten >=6pt, Rightarrow, from=3, to=5]
	\arrow["{(\text{nat } \widetilde N)}"{description}, curve={height=6pt}, draw=none, from=6, to=7]
\end{tikzcd}
\]
Similar diagrams exist for $N_*l$, $N_*r$, and $N_*s$. We note that only $N_*s$ makes use of the symmetry of $N$.
\begin{equation} \label{diag:N-star-s}
\begin{tikzcd}[cramped]
	{I_\WCat \otimes N_*C} & {N_*I_\VCat \otimes N_*\C} & {N_*\C \otimes N_*\C} \\
	& {N_*(I_\VCat \otimes \C)} & {N_*(\C \otimes \C)} \\
	& {N_*\C}
	\arrow["{(N_\epsilon)_* \otimes 1}", from=1-1, to=1-2]
	\arrow["{(N \text{ mon.})}"{description}, draw=none, from=1-1, to=2-2]
	\arrow["{\lambda_\WCat}"', curve={height=12pt}, from=1-1, to=3-2]
	\arrow["{NJ \otimes 1}", from=1-2, to=1-3]
	\arrow[""{name=0, anchor=center, inner sep=0}, "{(\widetilde N)_*}", from=1-2, to=2-2]
	\arrow[""{name=1, anchor=center, inner sep=0}, "{(\widetilde N)_*}", from=1-3, to=2-3]
	\arrow["{N(J \otimes 1)}", from=2-2, to=2-3]
	\arrow[""{name=2, anchor=center, inner sep=0}, "{N\lambda_\VCat}"', from=2-2, to=3-2]
	\arrow["{N\boxtimes}", from=2-3, to=3-2]
	\arrow["{(\text{nat } \widetilde N)}"{description}, curve={height=-6pt}, draw=none, from=0, to=1]
	\arrow["{N_*l}"{description}, shorten <=9pt, shorten >=9pt, Rightarrow, from=2-3, to=2]
\end{tikzcd}
\qquad \quad
\begin{tikzcd}[cramped, column sep = scriptsize]
	{N_*\C \otimes N_*\C} & {N_*(\C \otimes \C)} \\
	{N_*\C \otimes N_*\C} & {N_*(\C \otimes \C)} & {N_*\C}
	\arrow["{(\widetilde N)_*}", from=1-1, to=1-2]
	\arrow[""{name=0, anchor=center, inner sep=0}, "{\sigma_\WCat}"', from=1-1, to=2-1]
	\arrow[""{name=1, anchor=center, inner sep=0}, "{N\sigma_\VCat}"', from=1-2, to=2-2]
	\arrow[""{name=2, anchor=center, inner sep=0}, "{N\boxtimes}", curve={height=-12pt}, from=1-2, to=2-3]
	\arrow["{(\widetilde N)_*}"', from=2-1, to=2-2]
	\arrow["{N\boxtimes}"', from=2-2, to=2-3]
	\arrow["{(N \text{ sym.})}"{description, pos=0.3}, draw=none, from=0, to=1]
	\arrow["{N_*s}", shorten <=7pt, shorten >=7pt, Rightarrow, from=2, to=2-2]
\end{tikzcd}
\end{equation}

\noindent To show that the proposed coherence maps satisfy the SMC axioms, one observes that $N_*^0$ is a strict monoidal functor: For $f \in \C_0(A,B)$ and $g \in \C_0(B,C)$ one has $N_*(f \then g) = (N_* f) \then (N_* g)$. Similarly, for $f_i \in \C_0(A_i, B_i)$ one has $N_*(f_1 \bar \boxtimes_0 f_2) = (N_*f_1) \bar \boxtimes^N_0 (N_*f_1)$. Hence, when $\C$ is symmetric, so is $N_*^0$, since $N_*^0s = N_*s$.
These claims are verified by the following diagram where $(*)$ denotes either $\then$ or $\boxtimes$:
\[
    \begin{tikzcd}[row sep = scriptsize]
    	{I_\W} & {I_\W \otimes I_\W} & {NI_\V \otimes NI_\V} & {N\C(A_1,B_1) \otimes N\C(A_2,B_2)} \\
    	& {I_\W \otimes NI_\V} \\
    	{N(I_\V)} && {N(I_\V \otimes I_\V)} & {N(\C(A_1,B_1) \otimes \C(A_2,B_2))} \\
    	&&& {N(\C(A_1\boxtimes A_2, B_1 \boxtimes B_2))}
    	\arrow["{\lambda^{-1}}", from=1-1, to=1-2]
    	\arrow["{N_\epsilon}"', from=1-1, to=3-1]
    	\arrow["{N_\epsilon \otimes N_\epsilon}", from=1-2, to=1-3]
    	\arrow[from=1-2, to=2-2]
    	\arrow["{Nf_1 \otimes Nf_2}", from=1-3, to=1-4]
    	\arrow["{\widetilde N}", from=1-3, to=3-3]
    	\arrow["{(\text{nat } \widetilde N)}"{description}, draw=none, from=1-3, to=3-4]
    	\arrow["{\widetilde N}", from=1-4, to=3-4]
    	\arrow["{(\text{nat } \lambda)}"{description}, draw=none, from=2-2, to=1-1]
    	\arrow[""{name=0, anchor=center, inner sep=0}, from=2-2, to=1-3]
    	\arrow["\lambda"{description}, from=2-2, to=3-1]
    	\arrow["{(N \text{ monoidal})}"{description}, draw=none, from=2-2, to=3-3]
    	\arrow["{N(\rho^{-1})}"', from=3-1, to=3-3]
    	\arrow["{N(f_1 \otimes f_2)}"', from=3-3, to=3-4]
    	\arrow["{N(*)}", from=3-4, to=4-4]
    	\arrow[draw=none, from=1-2, to=0]
    \end{tikzcd}
\]
Remarkably, this does not require $\W$-functoriality of $\boxtimes^N$. It follows that the coherence diagrams of $\C_0$ also hold for $N_*a$, $N_*r$, $N_*l$ and $N_*s$ in $(N_*\C)_0$ when mapped via $N_*^0$.

\subsection{Transferring SMC structure along the slice functor} \label{app:transfer-monoidal-slice}
In this section, we give a more detailed proof of \cref{prop:slice-monoidal-transfer}. Since the slice functor $ \U \sliceFunctor \colon \W \to \smcat$ fails to be symmetric on the nose, the transferred tensor product will be a pseudofunctor (weak 2-functor) and the symmetry coherence map will form a strong 2-natural transformation, rather than a strict one. Nonetheless, these data still assemble into a particularly strict instance of a symmetric monoidal 2-category, as defined in \cite{stayCompact2016}.

\noindent We begin by showing that the monoidal product of a $\V$-enriched SMC $(\C, \boxtimes, J)$ transfers along the partial slice functor to yield a pseudofunctor. Without loss of generality we assume $\U = \W$ to be a strict (possibly symmetric) monoidal category. Denote $N := \U \sliceFunctor$. We define $\boxtimes^N \colon N_*\C \otimes N_*\C$ to be identity-on-object and map arrows according to the composition of functors
\[ 
    N\C(A_1,B_1) \otimes N\C(A_2,B_2) \overset{\widetilde N}{\longrightarrow} N(\C(A_1,B_1) \otimes \C(A_2,B_2)) \overset{N\boxtimes}{\longrightarrow} N\C(A_1 \boxtimes A_2, B_1 \boxtimes B_2).
\]
By \cref{prop:change-of-base}, $N\boxtimes$ is a strict 2-functor. We will now show that $\widetilde N$ induces an identity-objects pseudofunctor $(\widetilde N)_*:  N_*\C \times N_*\C \to N_*(\C \otimes \C)$. Then $\boxtimes^N = (\widetilde N)_* \then N\boxtimes$ is also a pseudofunctor.

\noindent To begin, we note that $(\widetilde N)_*$ strictly preserves identities since $(\widetilde N)_*(\id_A, \id_B) = \id_{A} \otimes \id_B$ which is the identity in $N_*(\C \otimes \C)$. For composition, suppose we are given the following 2-cells in $N_* \C \otimes N_*\C$:
\[  
\left(
\begin{tikzcd}[cramped]
	{U_1} & {V_1} \\
	& {\C(A_1,B_1)}
	\arrow["{\varphi_1}", from=1-1, to=1-2]
	\arrow["{f_1}"', from=1-1, to=2-2]
	\arrow["{f'_1}", from=1-2, to=2-2]
\end{tikzcd}
    , \quad
\begin{tikzcd}[cramped]
	{U_2} & {V_2} \\
	& {\C(A_2,B_2)}
	\arrow["{\varphi_2}", from=1-1, to=1-2]
	\arrow["{f_2}"', from=1-1, to=2-2]
	\arrow["{f'_2}", from=1-2, to=2-2]
\end{tikzcd} 
\right)
\qquad
\left(
\begin{tikzcd}[cramped]
	{P_1} & {Q_1} \\
	& {\C(B_1,C_1)}
	\arrow["{\psi_1}", from=1-1, to=1-2]
	\arrow["{g_1}"', from=1-1, to=2-2]
	\arrow["{g'_1}", from=1-2, to=2-2]
\end{tikzcd}
    ,\quad
\begin{tikzcd}[cramped]
	{P_2} & {Q_2} \\
	& {\C(B_2,C_2)}
	\arrow["{\psi_2}", from=1-1, to=1-2]
	\arrow["{g_2}"', from=1-1, to=2-2]
	\arrow["{g'_2}", from=1-2, to=2-2]
\end{tikzcd}
\right)
\]
Then $(\widetilde N)_*$ maps these pairs to 
\[
\begin{tikzcd}[cramped]
	{U_1U_2} & {V_1V_2} \\
	& {\C(A_1,B_1)\C(A_2, B_2)}
	\arrow["{\varphi_1\varphi_2}", from=1-1, to=1-2]
	\arrow["{f_1f_2}"', from=1-1, to=2-2]
	\arrow["{f'_1f'_2}", from=1-2, to=2-2]
\end{tikzcd}
\qquad \qquad
\begin{tikzcd}[cramped]
	{P_1P_2} & {Q_1Q_2} \\
	& {\C(B_1,C_1)\C(B_2, C_2)}
	\arrow["{\psi_1\psi_2}", from=1-1, to=1-2]
	\arrow["{g_1g_2}"', from=1-1, to=2-2]
	\arrow["{g'_1g'_2}", from=1-2, to=2-2]
\end{tikzcd}
\]
where we omit the monoidal product for legibility. Composing these yields
\[
\begin{tikzcd}[cramped]
	{U_1U_2P_1P_2} & {V_1V_2Q_1Q_2} \\
	& {\C(A_1,B_1)\C(A_2,B_2)\C(B_1,C_1)\C(B_2,C_2)} \\
	& {\C(A_1,B_1)\C(B_1,C_1)\C(A_2,B_2)\C(B_2,C_2)} & {\C(A_1,C_1)\C(A_2,C_2)}
	\arrow["{\varphi_1\varphi_2\psi_1\psi_2}", from=1-1, to=1-2]
	\arrow["{f_1f_2g_1g_2}"', from=1-1, to=2-2]
	\arrow["{f'_1f'_2g'_1g'_2}", from=1-2, to=2-2]
	\arrow["m"', from=2-2, to=3-2]
	\arrow["{\then \otimes \then}"', from=3-2, to=3-3]
\end{tikzcd}
\]
where $m$ is the symmetry map from \cref{def:monoidal-prod-Vcats}.
On the other hand, composing in $N_*\C \times N_*\C$ and then applying $(\widetilde N)_*$ gives
\[
\begin{tikzcd}[cramped]
	{V_1Q_1V_2Q_2} & {U_1P_1U_2P_2} \\
	{\C(A_1,B_1)\C(B_1,C_1)\C(A_2,B_2)\C(B_2,C_2)} \\
	{\C(A_1,C_1)\C(A_2,C_2)}
	\arrow["{f'_1g'_1f'_2g'_2}", from=1-1, to=2-1]
	\arrow["{\varphi_1\psi_1\varphi_2\psi_2}"', from=1-2, to=1-1]
	\arrow["{f_1g_1f_2g_2}", from=1-2, to=2-1]
	\arrow["{\then \otimes \then}", from=2-1, to=3-1]
\end{tikzcd}
\]
These two 2-cells can be compared in $N_*(\C \otimes \C)$ using the natural isomorphisms $m$:
\[
\begin{tikzcd}[cramped, column sep = tiny]
	& {V_1V_2Q_1Q_2} & {V_1Q_1V_2Q_2} \\
	{U_1U_2P_1P_2} &&& {U_1P_1U_2P_2} \\
	& {\C(A_1,B_1)\C(A_2,B_2)\C(B_1,C_1)\C(B_2,C_2)} & {\C(A_1,B_1)\C(B_1,C_1)\C(A_2,B_2)\C(B_2,C_2)} \\
	& {\C(A_1,B_1)\C(B_1,C_1)\C(A_2,B_2)\C(B_2,C_2)} & {\C(A_1,C_1)\C(A_2,C_2)}
	\arrow["m", from=1-2, to=1-3]
	\arrow["{f'_1f'_2g'_1g'_2}", dashed, from=1-2, to=3-2]
	\arrow["{f'_1g'_1f'_2g'_2}", dashed, from=1-3, to=3-3]
	\arrow["{\varphi_1\varphi_2\psi_1\psi_2}", from=2-1, to=1-2]
	\arrow["m", curve={height=12pt}, from=2-1, to=2-4]
	\arrow["{f_1f_2g_1g_2}"', from=2-1, to=3-2]
	\arrow["{\varphi_1\psi_1\varphi_2\psi_2}"', from=2-4, to=1-3]
	\arrow["{f_1g_1f_2g_2}", from=2-4, to=3-3]
	\arrow["m", from=3-2, to=3-3]
	\arrow["m"', from=3-2, to=4-2]
	\arrow["{\then \otimes \then}", from=3-3, to=4-3]
	\arrow["{\then \otimes \then}"', from=4-2, to=4-3]
\end{tikzcd}
\]
Finally, we need to verify the two axioms for pseudofunctors. Since we are in a strict 2-category, and $(\widetilde N)_*$ strictly preserves identities, the first simplifies to
\[
\begin{tikzcd}[cramped]
	{(\widetilde N)_*(f_1, f_2) \horthen (\widetilde N)_*(g_1, g_2) \horthen (\widetilde N)_*(h_1, h_2)} & {(\widetilde N)_*(f_1 \horthen g_1,f_2 \horthen g_2) \horthen (\widetilde N)_*(h_1, h_2)} \\
	{(\widetilde N)_*(f_1, f_2) \horthen (\widetilde N)_*(g_1 \horthen h_1, g_2 \horthen h_2)} & {(\widetilde N)_*(f_1 \horthen g_1 \horthen h_1 , f_2 \horthen g_2 \horthen h_2)}
	\arrow["{m \horthen 1}", from=1-1, to=1-2]
	\arrow["{1 \horthen m}"', from=1-1, to=2-1]
	\arrow["m", from=1-2, to=2-2]
	\arrow["m"', from=2-1, to=2-2]
\end{tikzcd}
\]
It commutes since the composites only consist of symmetries and identities inducing the same permutation. The second condition is
\[
\begin{tikzcd}[cramped]
	{(\widetilde N)_*(f_1 , f_2) \horthen (\widetilde N)_*(\id_{B_1} , \id_{B_2})} \\
	{(\widetilde N)_*(f_1 , f_2) \horthen \id_{(\widetilde N)_*(B_1 , B_2)}} & {(\widetilde N)_*(f_1 \horthen \id_{B_1} , f_2 \horthen \id_{B_2})}
	\arrow["m", from=1-1, to=2-2]
	\arrow[equals, from=2-1, to=1-1]
	\arrow[equals, from=2-1, to=2-2]
\end{tikzcd}
\]
It commutes since in this case $m: U_1 \otimes U_2 \otimes I \otimes I = U_1 \otimes U_2 = U_1 \otimes I \otimes U_2 \otimes I$ is the identity. The same holds for its analog with $\id_{(\widetilde N)_*(A_1,A_2)}$.
We have thus established that $\boxtimes^N$ is a pseudofunctor with tensorator  $\vartheta_{f_1,f_2,g_1,g_2}: (f_1 \boxtimes f_2) \then (g_1 \boxtimes g_2) \Rightarrow (f_1 \then g_1) \boxtimes (f_2 \then g_2)$ given by $m$.

\noindent We now turn to the coherence isomorphisms. The same diagrams used in \cref{app:transfer-SMC-symm-N} show that $N_*a$, $N_*l$ and $N_*r$ define strict 2-natural transformations of the correct type. Moreover, since the composition and monoidal product in $N_*\C$ is precisely that in the underlying category $\C_0$, the coherence maps satisfy the axioms of a monoidal category on the nose. We can thus choose all modifications (apart from the tensorator) occurring in \cite[Def.~4.4]{stayCompact2016} to be identity 2-cells. Hence, all required polytopes trivially commute, proving that our structure is a monoidal 2-category.

\noindent Next, we show that if $\C$ is symmetric, we obtain a symmetric monoidal 2-category. The diagram used in \cref{diag:N-star-s} for $N_*s$ does not commute on the nose since $N$ is not symmetric. However, we can replace the commuting square by a strong 2-natural transformation $\sigma_\W$
\[
\begin{tikzcd}[cramped]
	{N_*\C \times N_*\C} & {N_*(\C \otimes \C)} \\
	{N_*\C \times N_*\C} & {N_*(\C \otimes \C)} & {N_*\C}
	\arrow["{(\widetilde N)_*}", from=1-1, to=1-2]
	\arrow["{\sigma_\smcat}"', from=1-1, to=2-1]
	\arrow["{\sigma_\W}"{description}, shorten <=6pt, shorten >=6pt, Rightarrow, from=1-2, to=2-1]
	\arrow["{N\sigma_\VCat}"{description}, from=1-2, to=2-2]
	\arrow[""{name=0, anchor=center, inner sep=0}, "{N\boxtimes}", curve={height=-12pt}, from=1-2, to=2-3]
	\arrow["{(\widetilde N)_*}"', from=2-1, to=2-2]
	\arrow["{N\boxtimes}"', from=2-2, to=2-3]
	\arrow["{N_*s}", shorten <=7pt, shorten >=7pt, Rightarrow, from=0, to=2-2]
\end{tikzcd}
\]
with identity 1-cell components and natural 2-cells $\sigma_{f_1,f_2} := \sigma_{U_2,U_1}$, where $f_i : U_i \to \C(A_1,B_1)$. The axioms of a 2-natural transformation require compatibility with $m$, which is given as all composites consist of symmetries determined uniquely by the underlying permutation of the parameter spaces. The above pasting diagram gives rise to a strong 2-natural transformation $s^N: \boxtimes^N \Rightarrow \sigma_\smcat \then \boxtimes^N$ with component 1-cells $s^N_{A,B}: A \boxtimes B \to B \boxtimes A$ given by $s_{A,B} : I_\W \to \C(A\boxtimes B, B \boxtimes A)$ and 2-cells $s^N_{f_1,f_2}: s^N_{A_1,A_2} \horthen (f_2 \boxtimes^N f_1) \to (f_1 \boxtimes^N f_2) \horthen s^N_{B_1,B_2}$ given by $\sigma_{U_2,U_1}$. We note in passing that if $\W$ is not strict, we could weaken the other coherence isomorphisms $N_*a$, $N_*r$ and $N_*l$ in a similar manner, presumably yielding a symmetric monoidal bicategory.

\noindent We will now show that $s^N$ satisfies the requirements for being the symmetry coherence map for a symmetric monoidal 2-category. Since the 1-cell components of $s^N$ and $N_*a$ are the coherence maps $s$ and $a$ of $\C$ they satisfy the hexagon identities. Thus, the modifications $R$ and $S$ in \cite[Def.~4.6]{stayCompact2016} can be chosen to be identities. This makes all of the required commuting polytopes trivial, showing that we have a braided monoidal 2-category. Moreover, we have $s \then s = \id$ since both the 1-cell and 2-cell components are involutive. Hence, we can choose the syllepsis in \cite[Def.~4.7]{stayCompact2016} to be the identity and obtain a symmetric monoidal 2-category from \cite[Def.~4.8]{stayCompact2016}.

\noindent Finally, we turn to the inclusion $\iota: \C_0 \to N_*\C$ that sends $f: I_\V \to \C(A,B)$ to itself, where we view $\C_0$ as a 2-category with only identity 2-cells. Since composition and monoidal products in $N_*\C$ correspond exactly to those in $\C_0$, $\iota$ is a 2-functor that strictly preserves the monoidal product. Moreover, since the symmetry 1-cells satisfy $s^N_{A,B} = s_{A,B} = \iota s_{A,B}$, the inclusion is also symmetric.

\end{document}